\NeedsTeXFormat{LaTeX2e}
[1994/12/01]
\documentclass[12pt]{amsart}
\pagestyle{headings}

\title{Finite time blow up for a 1D model of 2D Boussinesq system}

\author{Kyudong Choi}
\thanks{Department of
Mathematics, University of Wisconsin, Madison, WI 53706, USA; email: kchoi@math.wisc.edu}
\author{Alexander Kiselev}
\thanks{Department of
Mathematics, University of Wisconsin, Madison, WI 53706, USA; email: kiselev@math.wisc.edu}
\author{Yao Yao}
\thanks{Department of
Mathematics, University of Wisconsin, Madison, WI 53706, USA; email: yaoyao@math.wisc.edu}

\date{\today }

\usepackage[ulem=normalem]{changes}




\usepackage{amsmath,amsthm,amsfonts,amsopn,verbatim,graphicx}
\usepackage[margin=1.2in]{geometry}
\chardef\bslash=`\\ 





\hfuzz1pc 


\newtheorem{thm}{Theorem}[section]

\newtheorem{lem}[thm]{Lemma}
\newtheorem{prop}[thm]{Proposition}

\newtheorem*{thm*}{Theorem}
\newtheorem*{lem*}{Lemma}
\newtheorem*{prop*}{Proposition}
\theoremstyle{definition}

\parindent 1em
\parskip 1.5ex

\theoremstyle{remark}
\newtheorem{rem}{Remark}[section]




\usepackage{amscd,amsthm}
\usepackage{}


\newcommand{\eval}[2][\right]{\relax
  \ifx#1\right\relax \left.\fi#2#1\rvert}



\begin{document}

\begin{abstract}
The 2D conservative Boussinesq system describes inviscid, incompressible, buoyant fluid flow in gravity field.
The possibility of finite time blow up for solutions of this system is a classical problem of mathematical hydrodynamics.
We consider a 1D model of 2D Boussinesq system motivated by a particular finite time blow up scenario. We prove that finite
time blow up is possible for the solutions to the model system.
\end{abstract}

\maketitle

\section{Introduction}

The 2D Boussinesq system for vorticity of the fluid $\omega(x,t)$ and density (or temperature) $\rho(x,t)$ is given by
\begin{eqnarray}\label{vortbous}
\partial_t \omega +(u \cdot \nabla)\omega = \partial_{x_1} \rho;\,\,\, \,\,\,
\partial_t \rho +(u \cdot \nabla)\rho = 0; \\
\nonumber u = \nabla^\perp (-\Delta)^{-1}\omega,\,\,\,\omega(x,0)=\omega_0(x),\,\,\,\rho(x,0)=\rho_0(x).
\end{eqnarray}
The 2D Boussinesq system models motion of buoyant incompressible fluid that takes place in atmosphere, ocean, inside Earth or stars, and in every kitchen. Global regularity of solutions is known when classical dissipation is
present in at least one of the equations \cite{Chae}, \cite{HouLi}, or under a variety of more general conditions on dissipation (see e.g. \cite{CaoWu1} for more information).
The regularity vs finite time blow up question for the inviscid 2D Boussinesq system \eqref{vortbous}
is a well known open problem that has appeared, for example, on the ``eleven great problems
of mathematical hydrodynamics" list proposed by Yudovich \cite{Yud2000}. There is also an interesting connection between \eqref{vortbous} and axi-symmetric three dimensional Euler equation: the equations are closely
related and virtually identical away from the rotation axis (see e.g. \cite{MB}, page 186).

There has been much numerical work on trying to find possible singular scenario for solutions of axi-symmetric 3D Euler equation with swirl or 2D Boussinesq system. Often, situations where strong growth
of solutions has been observed were later determined to be regular by further numerical or analytic research. For numerical studies, see for example \cite{PumSig}, \cite{EShu}, or a review \cite{Gibbon}.
Analytical tools for ruling out blow up scenario include nonlinearity depletion mechanisms discovered by Constantin, Fefferman and Majda \cite{CF1}, \cite{CFM} and later extensions in \cite{HL1}, \cite{HL2}.

In a recent work \cite{HouLuo1}, Tom Hou and Guo Luo suggested a new scenario for possible singularity formation in 3D Euler equation. In their scenario, the flow is axi-symmetric and confined in a rotating cylinder
with no flow condition on the boundary. The numerically observed growth of vorticity happens at the boundary of the cylinder, away from rotation axis. So one can equivalently work with \eqref{vortbous} set on
a square $D$ (corresponding to a fixed angular variable in the 3D case). 
Motivated by \cite{HouLuo1}, Kiselev and Sverak \cite{KS} considered a similar setting for the 2D Euler equation on a disk. The solutions of 2D Euler equation with smooth initial data
are well known to be globally regular. However the work  \cite{KS} constructs examples with double exponential growth of the vorticity gradient. This is known to be the fastest possible rate of growth, 
and \cite{KS} provides the first
example where such growth happens.
The growth in \cite{KS} also happens on the boundary, and their result confirms that the scenario of \cite{HouLuo1} is indeed an interesting candidate to consider for blow up in solutions of 3D Euler equation
or 2D Boussinesq system.

Compared to the 2D Euler case, the 2D Boussinesq system presents significant new difficulties for analysis. There are nonlinear effects coming from the coupling in \eqref{vortbous}, and possible growth in vorticity
makes solutions harder to control. A simplified one-dimensional model has been suggested in \cite{HouLuo1} and analyzed in \cite{HouLuo2}. It is given by
\begin{eqnarray}\label{HL1}
\partial_t \omega + u \partial_x \omega = \partial_x \rho; \,\,\, \partial_t \rho + u \partial_x \rho =0; \\
\nonumber u_x = H \omega,\,\,\,\omega(x,0)=\omega_0(x),\,\,\,\rho(x,0)=\rho_0(x)
\end{eqnarray}
where the the initial data is periodic with period two, the density function is even, the vorticity is odd with respect to $x=0$ and $x=1,$ 
and $H \omega$ denotes the periodic Hilbert transform of vorticity.
Local well-posedness and a number of useful estimates have been proved in \cite{HouLuo2} for the system \eqref{HL1}, and both numerical simulations as well as
formal arguments suggesting blow up have been carried out.
However a fully rigorous proof of finite time blow up is currently not available for the system \eqref{HL1}.

Our goal in this paper is to analyze a related but further simplified system that is inspired by \cite{KS}. The system is set on an interval $[0,1]$ with Dirichlet boundary
conditions for $\omega$ and $\rho.$
\begin{equation}
 \begin{cases}
&  \partial_t \rho(t,x) + u(t,x)\partial_x\rho(t,x)=0,\\[0.1cm]
&\partial_t \omega(t,x)+u(t,x)\partial_x \omega(t,x)=\partial_x\rho(t,x),\\[0.05cm]
&u(t,x)=-x\Omega(t,x),\quad\Omega(t,x)=\int_x^1\dfrac{\omega(t,y)}{y}dy,\\
&\omega(0,x)=\omega_0(x),\quad \rho(0,x)=\rho_0(x), \quad \omega_0(0)=\omega_0(1)=\rho_0(0)=\rho_0(1)=0.\\
 \end{cases}
 \label{eq:system}
\end{equation}

We choose to work with Dirichlet boundary conditions for both $\omega$ and $\rho,$ which is more natural than periodic setting for our version of the Biot-Savart law.
The Biot-Savart law linking fluid velocity to vorticity is the main difference between \eqref{HL1} and \eqref{eq:system}. 
The law for the system \eqref{eq:system} is simpler, even though closely related to the law for the system \eqref{HL1}. This facilitates
the analysis. 
Such simplified Biot-Savart law is motivated by the result proved in \cite{KS}. It is shown there that under certain conditions on the initial data $\omega_0,$ the flow near the origin $O$ is hyperbolic for all times.
Namely, apart from small exceptional sectors, the velocity $u$ near $O$ satisfies
\begin{eqnarray}\label{u1} u_1(x_1,x_2,t) = - \frac{4}{\pi} x_1\int_{Q(x_1,x_2)} \frac{y_1y_2}{|y|^4} \omega(y,t)\,dy_1dy_2 + x_1 B_1(x_1,x_2,t) \\
\label{u2} u_2(x_1,x_2,t) = \frac{4}{\pi}x_2\int_{Q(x_1,x_2)} \frac{y_1y_2}{|y|^4} \omega(y,t)\,dy_1dy_2 + x_2 B_2(x_1,x_2,t),
\end{eqnarray}
where $x_1,x_2 \geq 0,$ $|B_{1,2}(x_1,x_2,t)| \leq C(\gamma)\|\omega\|_{L^\infty}$ and $Q(x_1,x_2) = \{ y | y \in D, \,\,\,x_1 \leq y_1,\,\,\, x_2 \leq y_2 \}.$
The first term on the right hand side of \eqref{u1}, \eqref{u2} is the main term, and it is this term that is modeled by $u(x,t) = -x \int_x^1 \omega(y,t)/y \,dy$ in \eqref{eq:system}.
Thus one can expect the system \eqref{eq:system} to be a reasonable model of the true 2D Boussinesq dynamics as far as the hyperbolic flow formulas like \eqref{u1}, \eqref{u2}
remain valid, in particular all the time up to blow up if it happens. This is far from clear, even though the numerical simulations of Hou and Luo \cite{HouLuo1} seem to suggest that this might
be the case.

In the first two sections below we will establish local well-posedness and conditional regularity results for the system \eqref{eq:system}, in particular proving an analog of the
celebrated Beale-Kato-Majda criterion \cite{BKM}.
Then we will prove our main result
\begin{thm}\label{mainthm}
There exist $\omega_0,\rho_0 \in C_0^\infty([0,1])$ for which the solution of \eqref{eq:system} blows up in finite time. In particular,
\[ \int_0^{T^*} \|\omega(t)\|_{L^\infty}\,dt \rightarrow \infty \] for some $T^*<\infty.$
\end{thm}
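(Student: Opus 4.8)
The plan is to invoke the analog of the Beale--Kato--Majda criterion proved above, which reduces the theorem to exhibiting, for a judicious choice of $\omega_0,\rho_0\in C_0^\infty([0,1])$, a smooth solution whose vorticity has non--integrable $L^\infty$ norm up to some finite time. The mechanism I would exploit lives at the degenerate hyperbolic stagnation point $x=0$: there $u(t,0)=0$ and $\partial_x u(t,0)=-\Omega(t,0)$ with $\Omega(t,0)=\int_0^1\omega(t,y)/y\,dy$, so whenever $\Omega(t,0)>0$ the flow compresses towards the origin. It is cleanest to work in Lagrangian coordinates. Writing $\Phi_t$ for the flow of $u$, one has $\rho(t,\Phi_t(a))=\rho_0(a)$, $\partial_a\Phi_t(a)=\exp\!\int_0^t(\omega-\Omega)(s,\Phi_s(a))\,ds$, hence $\partial_x\rho(t,\Phi_t(a))=\rho_0'(a)/\partial_a\Phi_t(a)$ and $\omega(t,\Phi_t(a))=\omega_0(a)+\int_0^t\rho_0'(a)/\partial_a\Phi_s(a)\,ds$. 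In words: vorticity along a trajectory is the time integral of a density gradient that is itself amplified by the very compression of the flow it is helping to create --- this is the feedback loop to be closed. (Consistently, $\frac{d}{dt}\Omega(t,0)=\int_0^1\omega(t,x)^2/x\,dx+\int_0^1\partial_x\rho(t,x)/x\,dx$ after one integration by parts, which already shows the positive reinforcement, though the bound I will use is Lagrangian.)

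I would take $\omega_0\ge0$ and $\rho_0\ge0$ with $\rho_0$ increasing on all of $[0,1]$ except on a small interval adjacent to $x=1$ where it is brought back to zero, and $\omega_0$ large enough on that interval that the small negative values of $\partial_x\rho$ there never drive $\omega$ negative before blow--up. One then shows $\omega(t,\cdot)\ge0$ outside a negligible neighbourhood of $x=1$, so that $\Omega(t,x)\ge0$ and $u(t,x)\le0$ there; in particular $\Phi_t(a)\le a$ and the flow is genuinely compressive. Fix $a_0$ in the bulk region where $\rho_0'>0$, set $g(t)=\omega(t,\Phi_t(a_0))$ and $G(t)=\int_0^t g$. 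The heart of the argument is the pointwise estimate
\[ \Omega(t,\Phi_t(a_0))\ \ge\ (1+\mu)\,\omega(t,\Phi_t(a_0)) \qquad\text{for all }t\text{ in the life--span and some fixed }\mu>0. \]
Granting it, $\dot g=\partial_x\rho(t,\Phi_t(a_0))=\rho_0'(a_0)\exp\!\int_0^t(\Omega-\omega)(s,\Phi_s(a_0))\,ds\ge \rho_0'(a_0)\,e^{\mu G(t)}$, so $G$ obeys $\ddot G\ge c\,e^{\mu G}$ with $c=\rho_0'(a_0)>0$, $G(0)=0$, $\dot G(0)\ge0$. Multiplying by $\dot G\ge0$ and integrating gives $\dot G(t)^2\ge \tfrac{2c}{\mu}\bigl(e^{\mu G(t)}-1\bigr)$, hence $\dot G\ge c_1 e^{\mu G/2}$ once $G$ is large, hence $\tfrac{d}{dt}\bigl(e^{-\mu G/2}\bigr)\le -\tfrac{\mu}{2}c_1$, which forces $G(t)\to\infty$ as $t$ increases to some finite $T^*$. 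Since $\int_0^{T}\|\omega(t)\|_{L^\infty}\,dt\ge\int_0^{T}g(t)\,dt=G(T)$ for every $T<T^*$, letting $T\uparrow T^*$ gives the theorem.

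The main obstacle is establishing and propagating the pointwise inequality $\Omega(t,\Phi_t(a_0))\ge(1+\mu)\omega(t,\Phi_t(a_0))$. Since $\Omega(t,\Phi_t(a_0))=\int_{\Phi_t(a_0)}^{1}\omega(t,y)/y\,dy$, this says the vorticity stays comparable to $g(t)$ over a logarithmically long range of scales above $\Phi_t(a_0)$ --- neither collapsing into a much thinner layer, which would make $\Omega$ too small relative to the local vorticity, nor flattening out. Controlling this requires a bootstrap: the growth rate of $\omega$ at $\Phi_t(b)$ equals $\rho_0'(b)/\partial_a\Phi_t(b)$, so one needs $\rho_0'(b)$ and $\partial_a\Phi_t(b)$ to remain roughly constant for $b$ in a fixed neighbourhood of $a_0$ --- i.e.\ the flow distortion should stay essentially uniform there --- and this uniformity must itself be re--derived from, and closed against, the lower bound on $\Omega$ (which feeds back through $u_x=\omega-\Omega$). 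One also has to verify at the right endpoint that the tiny region near $x=1$ where $\partial_x\rho<0$, where $\omega$ may dip below zero, neither spreads appreciably nor contributes enough --- it carries only the benign weight $1/y\approx1$ --- to destroy the positivity of $\Omega$ at the relevant points. Finally the free parameters (the height and shape of $\rho_0$, the size of $\omega_0$, the location of $a_0$, the width of the exceptional interval) must be tuned simultaneously so that all of the above persists on the whole interval $[0,T^*)$.
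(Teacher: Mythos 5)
Your reduction via the Beale--Kato--Majda analog (Proposition~\ref{bkm}) and your Lagrangian bookkeeping are both correct: indeed $\partial_x\rho(t,\Phi_t(a))=\rho_0'(a)/\partial_a\Phi_t(a)$ with $\partial_a\Phi_t(a)=\exp\int_0^t(\omega-\Omega)(s,\Phi_s(a))\,ds$, and \emph{if} one had the pointwise comparability $\Omega(t,\Phi_t(a_0))\ge(1+\mu)\,\omega(t,\Phi_t(a_0))$ on the whole lifespan, your ODE $\ddot G\ge c\,e^{\mu G}$ would force $G=\int_0^t\|\omega\|_{L^\infty}$-type blow up in finite time. But that comparability estimate is not a technical footnote --- it is the entire content of the theorem, and you have only described a bootstrap for it, not closed one. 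The inequality asserts that the vorticity profile remains spread over a fixed logarithmic range of scales above $\Phi_t(a_0)$ (so that $\int_{\Phi_t(a_0)}^1\omega/y\,dy$ beats $\omega(t,\Phi_t(a_0))$ by a fixed factor) uniformly up to the blow-up time; note that it already fails for natural initial data (e.g.\ $\omega_0\equiv M$ on an interval $[x,Kx]$ with $\ln K<1$ gives $\Omega(0,x)<\omega_0(x)$), so it must be \emph{created and propagated} by the dynamics, and the vorticity production $\rho_0'(a)/\partial_a\Phi_t(a)$ is strongest precisely where the Jacobian is smallest, which tends to concentrate rather than spread the profile. Propagating uniform control of $\partial_a\Phi_t$ on a fixed Lagrangian neighbourhood of $a_0$ through $u_x=\omega-\Omega$ is exactly the self-consistency problem you would need to solve, and nothing in your outline shows it can be.

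The paper's proof avoids any pointwise comparison between $\Omega$ and $\omega$. It tracks a countable family of characteristics $\Phi_n(t)=\phi_t(x_n)$ labelled by level sets of $\rho_0$ ($\rho_0(x_n)=\tfrac12+2^{-n}$), discards the $\int\omega^2/y$ term in Lemma~\ref{lemma:Omega_t} entirely, and extracts growth solely from the density term: since $\rho$ is conserved along characteristics, $\int_{\Phi_n}^{\Phi_{n-1}}\partial_x\rho\,dy=2^{-n}$ exactly, so $\int_{\Phi_n}^{\Phi_{n-1}}\partial_x\rho(t,y)/y\,dy\ge 2^{-n}/\Phi_{n-1}(t)$. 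The amplification thus comes from the $1/y$ singularity of the Biot--Savart kernel as the characteristics approach $x=0$ --- a quantity controlled by $\psi_{n-1}=-\ln\Phi_{n-1}$, which is itself driven by $\Omega_{n-1}$ --- rather than from local Jacobian compression at a single point. This closes into the discrete recursion $a_n\ge e^{a_{n-1}-3n}-1+a_{n-1}$ on the dyadic time grid $t_n\uparrow 2$, showing $\phi_t(x_\infty)\to0$ in finite time and contradicting global existence; Proposition~\ref{bkm} then converts the breakdown into $\int_0^{T^*}\|\omega(t)\|_{L^\infty}\,dt=\infty$. If you wish to salvage your single-characteristic scheme, you would need to replace the unproven comparability $\Omega\ge(1+\mu)\omega$ by a mechanism of this telescoping type; as written, the proposal is a conditional argument whose hypothesis is as hard as the theorem.
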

Roughly speaking, the blow-up proof is done by tracking the evolution of $\Omega(x,t)$ along a family of characteristics originating from a sequence of points $x_1\geq x_2 \geq  \dots,$ where $x_\infty := \lim_{n\to\infty} x_n>0$ satisfies $\rho_0(x_\infty)>0$. By obtaining lower bound on $\Omega$ on this family of characteristics, we conclude that the characteristic originating from $x_\infty$ must touch the origin before some finite time $T$, which implies that the classical solution has to break down at (or before) time $T$.

The main new effect reflected in Theorem~\ref{mainthm} is a rigorous understanding of the mechanism how coupling in 2D Boussinesq can in principle lead to blow up.
The main simplifications the result utilizes 
are lack of two-dimensional geometry which makes certain monotonicity properties easier to control as well as 
 reliance on
the stable hyperbolic form of fluid velocity akin to \cite{KS}. These simplifications are clearly significant, but one has to take the first step.

\section{Local well-posedness}

It will be often useful for us to solve equations for $\omega$ and $\rho$ on characteristics.
Denote $\phi_t(x)$ the solution of $$\begin{cases}
                                          &\dfrac{d}{dt}\phi_t(x)=u(t,\phi_t(x)),\\
                                          &\phi_0(x)=x.\\
                                         \end{cases}$$
                                          Then we have
                                         $$\rho(t,\phi_t(x))=\rho_0(x),$$
                                         $$\omega(t,\phi_t(x))=
                                         \omega_0(x)+
                                         \int_0^t(\partial_x\rho)
                                         (s,\phi_s(x))ds.$$

First we consider the following lemma which says that $u$ has almost one more derivative
than $\omega$ has.
\begin{lem} Let $\omega\in C_0^\infty((0,1))$ be a smooth function
that is compactly supported in $(0,1)$.\\
 Then we have
 $\begin{cases}
  &\|u\|_{H^{m+1}}\leq C_m\cdot\|\omega\|_{H^{m}} \mbox{ for } m\geq 0
 \mbox{ and}\\
  &\|u^{(m+1)}\|_{L^\infty}\leq C_m\cdot\|\omega^{(m)}\|_{L^\infty} \mbox{ for } m\geq 1.
 \end{cases}$
\end{lem}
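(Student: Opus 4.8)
The plan is to recall the explicit Biot--Savart law $u(t,x) = -x\,\Omega(t,x)$ with $\Omega(t,x)=\int_x^1 \omega(t,y)/y\,dy$, and to estimate $u$ and its derivatives directly in terms of $\omega$. The key structural observation is that $\Omega$ is essentially an antiderivative of $\omega/x$, so differentiating $u$ transfers one derivative onto $\omega$ at the cost of the weight $1/x$; the compact support of $\omega$ in $(0,1)$ is exactly what makes the weight harmless near $x=0$. Concretely, I would first record that for $\omega \in C_0^\infty((0,1))$ the function $\omega(y)/y$ is itself smooth and compactly supported in $(0,1)$, with $\|\omega/y\|_{H^m}$ and $\|(\omega/y)^{(m)}\|_{L^\infty}$ controlled by the corresponding norms of $\omega$ (this uses only $\mathrm{supp}\,\omega \subset [\delta,1-\delta]$ for some $\delta>0$, but one must be slightly careful that $\delta$ depends on $\omega$; in fact the clean statement uses that $1/y \le 1/\delta$ on the support and Leibniz). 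Then $\Omega(t,x) = \int_x^1 (\omega/y)\,dy$ satisfies $\partial_x \Omega = -\omega(x)/x$, and $\Omega$ is bounded with $\|\Omega\|_{L^\infty} \le \|\omega/y\|_{L^1} \le C\|\omega\|_{L^\infty}$.

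For the $H^{m+1}$ bound, I would write $u = -x\Omega$ and differentiate: $u' = -\Omega - x\Omega' = -\Omega + \omega$, and for $k \ge 2$, $u^{(k)} = -k\,\Omega^{(k-1)} - x\,\Omega^{(k)} = -k\,\Omega^{(k-1)} + (\text{derivatives of }\omega)$, since $x\Omega^{(k)} = x\,(-\omega/x)^{(k-1)}$ expands by Leibniz into a combination of $x^{j-k+1}$ times derivatives of $\omega$, all of which are bounded on the support of $\omega$. Thus every derivative $u^{(k)}$, $k=1,\dots,m+1$, is a linear combination of $\Omega^{(j)}$ for $j \le k-1$ and of derivatives of $\omega$ up to order $k-1$; since $\Omega^{(j)} = -(\omega/x)^{(j-1)}$ for $j\ge 1$ and $\Omega^{(0)}=\Omega$ is bounded in $L^2$ on $[0,1]$, all terms are controlled by $\|\omega\|_{H^m}$. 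Summing over $k$ and using $\|u\|_{L^2}\le \|\Omega\|_{L^\infty}$ gives $\|u\|_{H^{m+1}} \le C_m\|\omega\|_{H^m}$. For the $L^\infty$ bound with $m\ge 1$: from the same expansion $u^{(m+1)} = -(m+1)\Omega^{(m)} + (\text{lower order in }\omega) = (m+1)(\omega/x)^{(m-1)} + \cdots$, and every term is a derivative of $\omega$ of order at most $m$ times a bounded function, so $\|u^{(m+1)}\|_{L^\infty} \le C_m \|\omega^{(m)}\|_{L^\infty}$ (here $m\ge 1$ is needed so that $u^{(m+1)}$ only involves $\Omega^{(j)}$ with $j\ge 1$, avoiding the non-pointwise-controllable $\Omega^{(0)}$, and the lower-order $\omega$ derivatives are dominated via interpolation or directly on the compact support).

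The main obstacle — really the only subtlety — is bookkeeping the weights $x^{-j}$ produced by repeatedly differentiating $\omega(x)/x$ and verifying that they never blow up: this is where compact support of $\omega$ in the \emph{open} interval enters decisively, and one should be transparent that the constants $C_m$ may depend on $\mathrm{dist}(\mathrm{supp}\,\omega,\{0,1\})$, or alternatively phrase the Leibniz expansion of $x\cdot(\omega/x)^{(k-1)}$ so that the net power of $x$ is always $\ge 0$ (which is in fact the case: $x \cdot \partial_x^{k-1}(x^{-1}\omega) = \sum_{j} c_j x^{1-1-j}\partial_x^{k-1-j}\omega \cdot$, wait — more carefully, $\partial_x^{k-1}(x^{-1}\omega) = \sum_{i=0}^{k-1}\binom{k-1}{i}(\partial_x^i x^{-1})(\partial_x^{k-1-i}\omega) = \sum_i \binom{k-1}{i}(-1)^i i!\, x^{-1-i}\partial_x^{k-1-i}\omega$, so $x$ times this has a factor $x^{-i}$ with $i\ge 1$ in all but the $i=0$ term). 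So strictly the weights are genuinely negative powers and one does rely on the support; I would state this cleanly and then the estimates are routine. A short induction on $m$, or simply the explicit Leibniz formula above, completes the argument.
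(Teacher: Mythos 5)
Your decomposition of $u^{(k)}$ is the same as the paper's: $u'=-\Omega+\omega$ and, for $k\ge 2$, $u^{(k)}$ is a linear combination of terms $\omega^{(k-1-i)}(x)/x^{i}$. The gap is in how you control the weights $x^{-i}$. You bound them by $1/\delta^{i}$ on the support of $\omega$, and you explicitly concede that your constants ``may depend on $\mathrm{dist}(\mathrm{supp}\,\omega,\{0,1\})$.'' That version of the lemma is too weak to serve its purpose. The estimate is applied in the local well-posedness argument to the iterates $\omega_n^\epsilon(t)$, whose supports move with the flow and can approach $x=0$ (indeed, characteristics reaching the origin is exactly the blow-up mechanism), and the Gronwall-type closure there needs $\|u\|_{H^{m+1}}\le C_m\|\omega\|_{H^m}$ with $C_m$ depending only on $m$. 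The remark following the lemma even extends it to all of $H^m_0((0,1))$, where there is no positive distance to the boundary at all. So a support-dependent constant is not an acceptable resolution of the ``only subtlety'' you identify.

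The missing idea is that the weights are tamed not by the distance of the support to $0$ but by the vanishing of $\omega$ and its derivatives at $0$. For the $L^2$ bound the paper integrates by parts to get the Hardy-type inequality
\begin{equation*}
\Bigl\|\frac{f(x)}{x^{n}}\Bigr\|_{L^2(0,1)}\le C_n\,\Bigl\|\frac{f'(x)}{x^{n-1}}\Bigr\|_{L^2(0,1)},
\end{equation*}
valid whenever the boundary term $f^2(x)/x^{2n-1}$ vanishes at $x=0$ and $x=1$; iterating gives $\|\omega^{(k-1-i)}/x^{i}\|_{L^2}\le C\|\omega^{(k-1)}\|_{L^2}$ with constants independent of the support (together with $\|\Omega\|_{L^p}\le p\|\omega\|_{L^p}$, the classical Hardy inequality, for the zeroth-order term). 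For the $L^\infty$ bound one uses the Taylor remainder estimate $|f(x)/x^{n}|\le C\|f^{(n)}\|_{L^\infty}$, again relying on $f(0)=\dots=f^{(n-1)}(0)=0$. Your observation at the end of the proposal — that the net powers of $x$ really are negative and therefore ``one does rely on the support'' — identifies the right difficulty but draws the wrong conclusion: one relies on the vanishing at the endpoints, not on a quantitative distance from them. With that substitution your argument matches the paper's.
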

\begin{proof}
Observe that $\|u\|_{L^\infty}\leq \|\omega\|_{L^1}$ and
 $u^\prime=-\Omega+\omega$.
 For $p\in[1,\infty)$, we obtain $\|\Omega\|_{L^p}\leq p\cdot\|\omega\|_{L^p}$ by using the
 following Hardy's inequality with $f(x)=\omega(x)/x:$
 $$\Big(\int_0^\infty\Big(\int_x^\infty|f(x)|dx\Big)^pdx\Big)^{1/p}\leq
 p\Big(\int_0^\infty|f(x)|^px^pdx\Big)^{1/p}.
 $$ It shows that $\|u\|_{H^1}\leq \|\omega\|_{L^2}$.

 For $u\in H^{m+1}$ estimate with $m\geq1$, observe that $u^{(m+1)}(x)=\sum_{i=0}^m
 C_{m,i}\cdot\frac{\omega^{(m-i)}(x)}{x^i}$ for some constants $C_{m,i}$. We claim
 $\|\frac{\omega^{(m-i)}(x)}{x^i}\|_{L^2}\leq C \|\omega^{(m)}\|_{L^2}$.
 Indeed, observe that for $n\geq1$ and for smooth $f$ which is compactly
 supported in $(0,1)$,
 \begin{equation*}\begin{split}
 \int_0^1\Big(\frac{f(x)}{x^{n}}\Big)^2dx&=\frac{f^2(x)}{(1-2n)x^{2n-1}}\Big|_{x=0}^{x=1}+\int_0^1\frac{2ff^\prime}{(2n-1)x^{2n-1}}dx \\
 &\leq
 \frac{2}{2n-1}\Big( \int_0^1\Big(\frac{f(x)}{x^{n}}\Big)^2dx\Big)^{1/2}
 \Big( \int_0^1\Big(\frac{f^\prime(x)}{x^{n-1}}\Big)^2dx\Big)^{1/2}.
 \end{split}\end{equation*} 

 This gives us $ \|f(x)/x^n\|_{L^2}\leq C \|f^\prime(x)/x^{n-1}\|_{L^2}$. We can iterate
 until we  get  $\|u\|_{H^{m+1}}\leq C\|\omega\|_{H^m}$.

 The $u^{(m+1)}\in L^\infty$ estimate follows from Taylor error estimates $|f(x)/x^n|\leq
 C\|f^{(n)}\|_{L^\infty}$.

\end{proof}
\begin{rem}
The above $u^{(m+1)}\in L^\infty$ estimate does not hold for the case $m=0$.
Instead, we have only  pointwise estimate:
\begin{equation*}   |u^\prime(x)|\leq \|\omega\|_{L^\infty}\cdot(1
   -\ln(x)) \mbox{ for }
   x\in(0,1).\end{equation*} However, it will be proved for a solution $\omega$ on $[0,T)$ with finite $T$ that
     $\int_0^{T}\|\omega(t)\|_{L^\infty}dt<\infty$ implies
   $\int_0^{T}\|\partial_xu(t)\|_{L^\infty}dt<\infty$ (see Proposition \ref{bkm}).

\end{rem}
\begin{rem}
 We can weaken the condition that $\omega$ is compactly supported in $(0,1)$. For example,
in order
 to get  $u\in H^{m+1}$ estimate
  assuming $\omega\in H^m_0((0,1))$ is enough  
 {(where
 $H^m_0((0,1))$ is the completion of $(C^\infty_0\cap H^m)((0,1))$ by using the topology of
 $H^m((0,1)))$.}
 Recall
that
we used the fact that $\omega$ is compactly supported in $(0,1)$ only to say the boundary term
  $\Big(\dfrac{f(x)}{x^{n-(1/2)}}\Big)^2\Big|_{x=0}^1$  from integration by parts  vanishes.
  From Sobolev embedding,
 $\omega\in H^m_0$ implies $\omega\in C^{m-1}$ and
 $\omega^{(i)}(0)=\omega^{(i)}(1)=0$ for $i=0,1,\dots,(m-1)$. Moreover, the embedding
gives us $\omega^{(m-1)}\in {C^{1/2}}$-Holder space, which implies
$\dfrac{\omega^{(m-1)}(x)}{\sqrt{x}}\leq C \|\omega\|_{H^m}$.
Taking $\omega\in H^m_0$ suffices to carry out the same computation  in the same manner as for
compactly supported function. 
Similarly, it is enough for $u^{(m+1)}\in L^\infty$ estimate
to assume $\omega^{(m)}\in L^\infty$ and $\omega^{(i)}(0)=\omega^{(i)}(1)=0$ for $i=0,1,\dots,(m-1)$ instead of assuming that $\omega$ is compactly supported in $(0,1)$.
\end{rem}

\begin{prop}\label{exist}
 Given any
 initial data
 { $(\omega_0,\rho_0)\in H^m_0((0,1))\times H^{m+1}_0((0,1))$}
 with $m\geq2,$ 
 there exists
 $T=T(\|\omega_0\|_{H^m}+\|\rho_0\|_{H^{m+1}})>0$ such that the system has a unique
 classical solution $(\omega,\rho)\in C([0,T];H_0^{m}\times H_0^{m+1})$.
\end{prop}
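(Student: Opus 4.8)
\smallskip
\noindent\emph{Proof strategy.}
The plan is to run a Picard iteration for this transport system and close the estimates by a standard energy argument, the only non-routine input being the gain-of-derivative bound $\|u\|_{H^{m+1}}\le C_m\|\omega\|_{H^m}$ from the Lemma above. Set $\omega^{(0)}\equiv\omega_0$, $\rho^{(0)}\equiv\rho_0$, and given $\omega^{(n)}$ define $u^{(n)}(t,x)=-x\int_x^1\omega^{(n)}(t,y)/y\,dy$, let $\phi^{(n)}_t$ be its flow, and let $\rho^{(n+1)},\omega^{(n+1)}$ solve the \emph{linear} transport problems $\partial_t\rho^{(n+1)}+u^{(n)}\partial_x\rho^{(n+1)}=0$ and $\partial_t\omega^{(n+1)}+u^{(n)}\partial_x\omega^{(n+1)}=\partial_x\rho^{(n+1)}$ with the given data, equivalently by integrating along $\phi^{(n)}_t$ exactly as in the displayed characteristic formulas preceding the Lemma. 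Since $u^{(n)}(t,0)=0$ (the prefactor $x$) and $u^{(n)}(t,1)=0$ (because $\int_1^1=0$), each $\phi^{(n)}_t$ is a diffeomorphism of $[0,1]$ fixing the endpoints, so every iterate stays in $H^m_0\times H^{m+1}_0$ and the Dirichlet conditions are propagated.

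The next step is uniform bounds. Put $E_n(t)=\|\omega^{(n)}(t)\|_{H^m}^2+\|\rho^{(n)}(t)\|_{H^{m+1}}^2$. Differentiating the $\omega$-equation $m$ times and the $\rho$-equation $m{+}1$ times, pairing in $L^2$ with $\partial_x^m\omega^{(n+1)}$ and $\partial_x^{m+1}\rho^{(n+1)}$ respectively, and integrating the transport terms by parts (no boundary terms arise, since $u^{(n)}$ and enough derivatives of $\omega^{(n+1)},\rho^{(n+1)}$ vanish at $0,1$) produces a differential inequality whose right-hand side involves $\|\partial_x u^{(n)}\|_{L^\infty}$, the commutators $[\partial_x^m,u^{(n)}]\partial_x\omega^{(n+1)}$, $[\partial_x^{m+1},u^{(n)}]\partial_x\rho^{(n+1)}$, and the coupling term $\|\partial_x^{m+1}\rho^{(n+1)}\|_{L^2}\|\partial_x^m\omega^{(n+1)}\|_{L^2}\le E_{n+1}$. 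The Kato--Ponce commutator estimate bounds the commutators by $C\big(\|\partial_x u^{(n)}\|_{L^\infty}+\|u^{(n)}\|_{H^{m+1}}\big)\big(\|\omega^{(n+1)}\|_{H^m}+\|\rho^{(n+1)}\|_{H^{m+1}}\big)$, where one uses the Sobolev embedding $H^{m}\hookrightarrow C^1$ (valid since $m\ge2$). The Lemma gives $\|u^{(n)}\|_{H^{m+1}}\le C_m\|\omega^{(n)}\|_{H^m}$, and $\|\partial_x u^{(n)}\|_{L^\infty}\le\|\Omega^{(n)}\|_{L^\infty}+\|\omega^{(n)}\|_{L^\infty}\le C\|\omega^{(n)}\|_{H^m}$, the bound on $\Omega^{(n)}$ coming from $\omega^{(n)}(0)=0$ and $|\omega^{(n)}(y)/y|\le\|\partial_x\omega^{(n)}\|_{L^\infty}$. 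Hence $\tfrac{d}{dt}E_{n+1}\le C(1+E_n+E_{n+1})^{3/2}$, and a standard continuity/bootstrap argument yields $T=T(E_0(0))>0$ and $M$ with $\sup_{[0,T]}E_n\le M$ for all $n$.

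Finally, pass to the limit. Estimating the differences $\omega^{(n+1)}-\omega^{(n)}$, $\rho^{(n+1)}-\rho^{(n)}$ in the lower norm $L^2\times H^1$ — the contraction coming from Lipschitz dependence of $u^{(n)}$ on $\omega^{(n)}$ in $L^2$, which is the $m=0$ content of the Lemma — shows $(\omega^{(n)},\rho^{(n)})$ is Cauchy in $C([0,T];L^2\times H^1)$; interpolating with the uniform high-norm bound gives convergence in $C([0,T];H^{m'}\times H^{m'+1})$ for every $m'<m$, which suffices to pass to the limit and identify a solution $(\omega,\rho)\in L^\infty([0,T];H^m_0\times H^{m+1}_0)$. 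Weak-in-time continuity at the top regularity is immediate from the equation; upgrading to strong continuity — hence to a genuine classical solution in $C([0,T];H^m_0\times H^{m+1}_0)$ — is the usual argument that $t\mapsto E(t)$ is continuous, via the energy identity together with a Bona--Smith mollification. Uniqueness follows by applying the same $L^2\times H^1$ difference estimate to two solutions and Gr\"onwall.

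I expect the energy estimate in the second step to be the main obstacle: one must check that the non-local and mildly singular Biot--Savart law $u=-x\int_x^1\omega/y$ interacts correctly with the commutator estimates, i.e. that the extra derivative supplied by the Lemma exactly offsets the loss in $[\partial_x^m,u\partial_x]$, and that the near-origin factor $1/y$ appearing in $\Omega$ and in the higher derivatives of $u$ produces no uncontrolled contribution. Both points rest on the Dirichlet conditions and the resulting vanishing of $\omega,\rho$ and their derivatives up to the relevant order at the endpoints, which is precisely what the iteration of the first step preserves.
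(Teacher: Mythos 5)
Your proposal is correct and follows essentially the same route as the paper: a Picard iteration on the linearized transport system, energy estimates closed by the gain-of-derivative bound $\|u\|_{H^{m+1}}\le C_m\|\omega\|_{H^m}$ to get a uniform existence time $T=T(\|\omega_0\|_{H^m}+\|\rho_0\|_{H^{m+1}})$, then passage to the limit, recovery of strong continuity in time at the top norm, and uniqueness via a low-norm difference estimate. The only structural difference is that the paper first rescales and mollifies the data to make it smooth and compactly supported in $(0,1)$ (so the Lemma applies verbatim) and removes this regularization by a second limit $\epsilon\to0$, and extracts the limit by weak-$*$ compactness plus time-derivative bounds rather than your low-norm contraction; your direct iteration on $H^m_0\times H^{m+1}_0$ is legitimate in view of the paper's remark extending the Lemma to $H^m_0$ data.
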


\begin{proof}

Consider a function $\psi\in C^\infty(\mathbb{R})$ such that $\int \psi =1$,
$\psi\geq 0$ and $supp(\psi)\subset[-1,1],$ and set $\psi_\epsilon(x):=
\psi(x/\epsilon)/\epsilon$ for $\epsilon>0$. 
First we replace the initial data $(\omega_0,\rho_0)$ with approximations compactly supported in $(0,1)$,
given by $(\tilde{\omega_0},\tilde{\rho_0})(x):=(\omega_0,\rho_0)(\frac{x-2\epsilon}{1-4\epsilon})$.
Then we mollify the initial data $(\tilde{\omega_0},\tilde{\rho_0})$ by convolution:
$\omega^\epsilon_0:=\tilde{\omega_0}*\psi_\epsilon$ and $\rho^\epsilon_0:=
\tilde{\rho_0}*\psi_\epsilon$. Note that $\omega_0^\epsilon$ and $\rho_0^\epsilon$
lie in $C^\infty$ and they are
compactly supported in $[\epsilon,1-\epsilon]\subset(0,1)$.\\
 Define $u_0^\epsilon(t,x):=-x\int_x^{1}\frac{\omega^\epsilon_{0}(y)}{y}dy$. Then
 consider the following iteration scheme for $n\geq1:$
 \begin{equation}\label{iteration}
\begin{cases}
&  \partial_t \rho_n^\epsilon + u_{n-1}^\epsilon\partial_x\rho_n^\epsilon=0 \mbox{ with }
\rho^\epsilon(0)
=\rho_0^\epsilon, \\
&\partial_t \omega_n^\epsilon+u_{n-1}^\epsilon\partial_x \omega_n^\epsilon=\partial_x\rho_n^\epsilon
\mbox{ with } \omega_n^\epsilon(0)
=\omega_0^\epsilon,\\
&u_{n}^\epsilon(t,x)=-x
\int_x^{1}\frac{\omega_{n}^\epsilon(t,y)}{y}dy.
 \end{cases}
\end{equation}
Namely, for each $n\geq1$, we can solve the characteristic equations
$$\begin{cases}
  & \frac{d}{dt}\phi_n^\epsilon(t,x)=u_{n-1}^\epsilon(t,\phi_n(t,x)),\\
  & \phi_n^\epsilon(0,x)=x
 \end{cases}$$ for $t\in[0,\infty)$ since $u_{n-1}^\epsilon\in C^\infty_{t,x}$. Then define $\rho_n^\epsilon, \omega_n^\epsilon$ for $t\in[0,\infty)$ via the characteristics so that
                                         $\rho_n^\epsilon(t,\phi_n^\epsilon(t,x))=
                                         \rho_0^\epsilon(x)$ and
                                         $\omega_n^\epsilon(t,\phi_n^\epsilon(t,x))=
                                         \omega_0^\epsilon(x)+
                                         \int_0^t(\partial_x\rho_n^\epsilon)
                                         (s,\phi_n^\epsilon(s,x))ds$. Note that
                                         this process can be repeated and
                                         we get
                                         $\rho_n^\epsilon,\omega_n^\epsilon\in
                                         C^\infty_{t,x}$ 
                                         which are 
                                         are compactly
                                         supported in
                                         $(0,1)$
                                         for each $t>0$ since
                                         $x=0$ and $1$ are stationary points under the flow.\\

Let $m\geq2$. Simple energy estimates give us that, 
for any $n\geq1$,
$$
\frac{d}{dt}\Big(
\|\omega_n^\epsilon(t)\|^2_{H^m}
+\|\rho_n^\epsilon(t)\|^2_{H^{m+1}}
\Big)\leq
C\Big(\|u_{n-1}^\epsilon(t)\|_{H^{m+1}}+1\Big)\Big(
\|\omega_n^\epsilon(t)\|^2_{H^m}
+\|\rho_n^\epsilon(t)
\|^2_{H^{m+1}}
\Big).
$$ 
Since  $\rho_n^\epsilon(t),\omega_n^\epsilon(t)$ are compactly
                                         supported in $(0,1)$, we have
 $\|u_{n-1}^\epsilon(t)\|_{H^{m+1}}\leq C
\|\omega_{n-1}^\epsilon(t)\|_{H^{m}}$ by the previous lemma. As a result,
we obtain $\begin{cases}&\frac{d}{dt}f_n^\epsilon(t)\leq C\sqrt{
f_{n-1}^\epsilon(t)}f_n^\epsilon(t),\\& f_n^\epsilon(0)=f^\epsilon_0\end{cases}$ where
$f_n^\epsilon(t):=
\|\omega_n^\epsilon(t)\|^2_{H^m}
+\|\rho_n^\epsilon(t)\|^2_{H^{m+1}}
+1$ and
$f^\epsilon_0:=
\|\omega_0^\epsilon\|^2_{H^m}
+\|\rho_0^\epsilon\|^2_{H^{m+1}}
+1$. After a straightforward monotonicity
argument, this implies
\begin{equation}\label{unif_esti_n_e}
 (f_n^\epsilon(t))\leq 1/((f^\epsilon_0)^{-1/2}-Ct)^2,\quad\mbox{for }n\geq 1
 \mbox{ and for } 0\leq t< C/\sqrt{f^\epsilon_0}.
\end{equation}

Denote $f_0:=
\|\omega_0\|^2_{H^m}
+\|\rho_0\|^2_{H^{m+1}}
+1$.
Take $T$ between
$0$ and $C/\sqrt{f_0}$.
Thanks to the fact that
$f^\epsilon_0$ converges to $f_0$ as $\epsilon\rightarrow 0$,
 we know
$T<C/\sqrt{f^\epsilon_0}$ for sufficiently small $\epsilon>0$.
Then, for small $\epsilon>0$, we get
\begin{equation}\label{bounded}\sup_{t\in[0,T]}\Big(
\|\omega_n^\epsilon(t)\|_{H^m}+\|\rho_n^\epsilon(t)\|_{H^{m+1}}
\Big)<\infty\end{equation}
and, by using the structure of \eqref{iteration},
\begin{equation}\label{time_derivative_bounded}\sup_{t\in[0,T]}\Big(
\|\partial_t\omega_n^\epsilon(t)\|_{H^{m-1}}+\|\partial_t\rho_n^\epsilon(t)\|_{H^m}
\Big)<\infty.\end{equation}

Note that the above estimates are uniform in $n\geq 1$. Then the existence of a solution
 $(\omega^\epsilon,\rho^\epsilon)\in C([0,T];H_0^{m}\times H_0^{m+1})$
to \eqref{eq:system} corresponding the mollified initial data
$(\omega^\epsilon_0,\rho^\epsilon_0)$ follows the standard argument (e.g. see \cite{MB}).\\

We briefly sketch this argument here. First there exists a weak-$*$ limit
$(\omega^\epsilon,\rho^\epsilon)\in L^\infty(0,T;H_0^{m}\times H_0^{m+1}),$ which follows from
\eqref{bounded}
by Banach-Alaoglu theorem. Then, by using
\eqref{bounded} and \eqref{time_derivative_bounded},
we can show strong convergence
$(\omega_n^\epsilon,\rho_n^\epsilon)\rightarrow (\omega^\epsilon,\rho^\epsilon)$ in
$C([0,T];H^{m-\delta}
\times H^{m+1-\delta})$ for all real $\delta>0$.  Recall that we assumed $m\geq2$. Thus, from Sobolev's inequality,
all terms
in \eqref{eq:system} become continuous (pointwise). Moreover \eqref{iteration} converges
pointwise to \eqref{eq:system}.
It shows that  $(\omega^\epsilon,\rho^\epsilon)$ is a classical solution to \eqref{eq:system}.
Since $H^{-(m-\delta)}\times
H^{-(m+1-\delta)}$ is dense in $H^{-m}\times H^{-(m+1)}$, our solution
$(\omega^\epsilon,\rho^\epsilon)$ is weakly continuous in time variable as
a $H_0^{m}\times H_0^{m+1}$ valued function. Lastly, thanks to
weak continuity in time and the estimate \eqref{unif_esti_n_e},
we can show
$(\omega^\epsilon,\rho^\epsilon)\in
C([0,T];H_0^{m}
\times H_0^{m+1})$ by showing that
both $\|\omega_n^\epsilon(t)\|_{H^m}$ and $\|\rho_n^\epsilon(t)\|_{H^{m+1}}$ are
continuous in time variable $t\in[0,T]$. In addition, we have
\begin{equation}\label{unif_esti_e}
 f^\epsilon(t)\leq 1/((f^\epsilon_0)^{-1/2}-Ct)^2,\quad
 \mbox{ for } 0\leq t\leq T.
\end{equation}

To find a solution for the original initial data $(\omega_0,\rho_0)$,
recall that $T$ does not depend on $\epsilon$, the estimate
\eqref{unif_esti_e} is uniform in $\epsilon>0$,
 and $f^\epsilon_0$ converges to
$f_0$ as $\epsilon\rightarrow 0$. Then we repeat the above procedure as $\epsilon\rightarrow 0$ in order
to get a solution $(\omega,\rho)\in C([0,T];H_0^{m}\times H_0^{m+1})$
 to \eqref{eq:system} corresponding to $(\omega_0,\rho_0)$ with the same estimate
 \begin{equation*}
\Big(
\|\omega(t)\|^2_{H^m}
+\|\rho(t)\|^2_{H^{m+1}}
+1\Big)
 \leq 1/((f_0)^{-1/2}-Ct)^2\quad
 \mbox{ for } 0\leq t\leq T.
\end{equation*}
  Its uniqueness in the space $C([0,T];H_0^{m}\times H_0^{m+1})$ is easy to show (e.g. see \cite{ChaeNam}).




\end{proof}

\section{Beale-Kato-Majda type criteria}

\begin{prop}\label{bkm}
 Let 
 {$(\omega,\rho)\in C([0,T);H^{m}_0\times H_0^{m+1})$}  be the unique solution provided by Proposition~\ref{exist}
 for initial data $(\omega_0,\rho_0)\in H^{m}_0\times H^{m+1}_0$ with $m\geq2$.
 Then for any finite $T^*\leq T$, the followings are equivalent:\\
 \noindent (1). $\sup_{t\in[0,T^*]}\Big(
 \|\omega(t)\|_{H^m}+ \|\rho(t)\|_{H^{m+1}}
  \Big)<\infty$.\\
 (2). $\int_0^{T^*}\|\partial_xu(t)\|_{L^\infty}dt<\infty$.\\
 (3). $\int_0^{T^*}\|\omega(t)\|_{L^\infty}dt<\infty$.\\
 (4). $\int_0^{T^*}\|\partial_x\rho(t)\|_{L^\infty}dt<\infty$.\\
\end{prop}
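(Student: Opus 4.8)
I would establish the four assertions equivalent by a chain of implications
\[ (1)\ \Longrightarrow\ (2)\ \Longrightarrow\ (4)\ \Longrightarrow\ (3)\ \Longrightarrow\ (1), \]
of which only the last link is substantial. The implication $(1)\Rightarrow(2),(3),(4)$ is immediate from one‑dimensional Sobolev embedding together with the previous lemma: since $m\ge 2$ we have $H^m\hookrightarrow C^1$ and $H^{m+1}\hookrightarrow C^1$, so $\|\omega(t)\|_{L^\infty}\le C\|\omega(t)\|_{H^m}$, $\|\partial_x\rho(t)\|_{L^\infty}\le C\|\rho(t)\|_{H^{m+1}}$, and $\|\partial_xu(t)\|_{L^\infty}\le C\|u(t)\|_{H^2}\le C\|\omega(t)\|_{H^1}\le C\|\omega(t)\|_{H^m}$; under (1) each of these is bounded on $[0,T^*]$, so the finite time integrals in $(2),(3),(4)$ converge.

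\textbf{Transfer among the integral conditions.} Here I would use the representations along characteristics. Differentiating $\rho(t,\phi_t(x))=\rho_0(x)$ in $x$, and noting $\tfrac{d}{dt}\partial_x\phi_t(x)=(\partial_xu)(t,\phi_t(x))\,\partial_x\phi_t(x)$ so that $\partial_x\phi_t(x)=\exp\!\big(\int_0^t(\partial_xu)(s,\phi_s(x))\,ds\big)$, one gets $\partial_x\rho(t,\phi_t(x))=\rho_0'(x)\exp\!\big(-\int_0^t(\partial_xu)(s,\phi_s(x))\,ds\big)$, hence
\[ \|\partial_x\rho(t)\|_{L^\infty}\le \|\rho_0'\|_{L^\infty}\,\exp\Big(\int_0^t\|\partial_xu(s)\|_{L^\infty}\,ds\Big). \]
Thus $(2)$ gives $\sup_{[0,T^*]}\|\partial_x\rho\|_{L^\infty}<\infty$, i.e. $(4)$. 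Likewise $\omega(t,\phi_t(x))=\omega_0(x)+\int_0^t(\partial_x\rho)(s,\phi_s(x))\,ds$ yields $\|\omega(t)\|_{L^\infty}\le\|\omega_0\|_{L^\infty}+\int_0^t\|\partial_x\rho(s)\|_{L^\infty}\,ds$, so $(4)$ gives $\sup_{[0,T^*]}\|\omega\|_{L^\infty}<\infty$, i.e. $(3)$. (In particular this already shows $(1)\Leftrightarrow(2)$ once the last implication is in place.)

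\textbf{Closing the loop from $(3)$.} To return to $(1)$ I would run the top‑order energy estimate: apply $\partial_x^m$ to the $\omega$–equation, $\partial_x^{m+1}$ to the $\rho$–equation, pair with $\partial_x^m\omega$ and $\partial_x^{m+1}\rho$, and use Kato--Ponce commutator estimates together with the lemma (notably $\|u\|_{H^{m+1}}\le C\|\omega\|_{H^m}$ and the Hardy bounds behind it, which also give $\|\partial_x^{m+1}u-\omega^{(m)}\|_{L^2}\le C\|\omega\|_{H^{m-1}}$). With $E(t):=\|\omega(t)\|_{H^m}^2+\|\rho(t)\|_{H^{m+1}}^2$ this produces
\[ \frac{d}{dt}E(t)\le C\big(1+\|\partial_xu(t)\|_{L^\infty}+\|\partial_x\rho(t)\|_{L^\infty}+\|\omega(t)\|_{L^\infty}\big)E(t). \]
The companion ingredient is a logarithmic bound for the velocity gradient, in the spirit of Beale--Kato--Majda: splitting $\Omega(x)=\int_x^\delta+\int_\delta^1$ and using $|\omega(y)|\le y\,\|\partial_x\omega\|_{L^\infty}\le Cy\,\|\omega\|_{H^m}$ near $0$, then optimizing in $\delta$, gives
\[ \|\partial_xu(t)\|_{L^\infty}\le\|\Omega(t)\|_{L^\infty}+\|\omega(t)\|_{L^\infty}\le C\,\|\omega(t)\|_{L^\infty}\Big(1+\log^+\!\big(e+\|\omega(t)\|_{H^m}\big)\Big). \]
Inserting this and the characteristic bound for $\|\partial_x\rho\|_{L^\infty}$ into the differential inequality for $Y(t):=\log(e+E(t))$ and invoking $\int_0^{T^*}\|\omega\|_{L^\infty}\,dt<\infty$, a Gronwall argument should give $\sup_{[0,T^*]}E<\infty$, which is $(1)$.

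\textbf{Main obstacle.} The genuine difficulty is exactly this last step: under $(3)$ alone the density gradient $\|\partial_x\rho\|_{L^\infty}$ — equivalently the distortion $1/\inf_y\partial_x\phi_t(y)$ of the flow near the origin, which is how the coupling enters the $H^{m+1}$ estimate for $\rho$ — is not a priori controlled, and the crude bound $\|\partial_x\rho(t)\|_{L^\infty}\le\|\rho_0'\|_{L^\infty}\exp(\int_0^t\|\partial_xu\|_{L^\infty})$ turns the Gronwall loop nonlinear and threatens a spurious finite‑time blow up in the estimate. Making it close seems to require exploiting the transport structure more carefully: the identities $\partial_x\phi_t(y)=\tfrac{\phi_t(y)}{y}\exp\!\big(\int_0^t\omega(s,\phi_s(y))\,ds\big)$ and $\phi_t(y)=y\exp\!\big(-\int_0^t\Omega(s,\phi_s(y))\,ds\big)$ reduce everything to an a priori bound on $\int_0^t\Omega(s,\phi_s(y))\,ds$, for which one has the closed evolution law along characteristics $\frac{d}{ds}\Omega(s,\phi_s(y))=-\tfrac{\rho_0(y)}{\phi_s(y)}+\int_{\phi_s(y)}^1\tfrac{\rho(s,z)}{z^2}\,dz+\int_{\phi_s(y)}^1\tfrac{\omega(s,z)^2}{z}\,dz$; combining these with the logarithmic inequality and a delicate (essentially doubly‑exponential) Gronwall estimate is where the real work lies, the remaining implications being routine.
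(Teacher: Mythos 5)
Your chain $(1)\Rightarrow(2)\Rightarrow(4)\Rightarrow(3)$ is correct and matches what the paper does for those links (Sobolev embedding plus the lemma for the first, the transported equation for $\partial_x\rho$ for the second, the Duhamel formula for $\omega$ along characteristics for the third). But the one implication that carries all the content --- getting from (3) back to the strong norms --- is not established. You candidly identify the obstruction yourself: with only $\int_0^{T^*}\|\omega\|_{L^\infty}\,dt<\infty$ in hand, the logarithmic bound $\|\partial_xu\|_{L^\infty}\lesssim \|\omega\|_{L^\infty}\log(e+\|\omega\|_{H^m})$ feeds into $\|\partial_x\rho\|_{L^\infty}\le\|\rho_0'\|_{L^\infty}\exp(\int\|\partial_xu\|_{L^\infty})$, and the resulting inequality for $Y=\log(e+E)$ contains a term like $\exp\big(C\int_0^t\|\omega\|_{L^\infty}Y\big)$, which a Gronwall argument cannot close. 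The sketch of a fix via the evolution law for $\Omega$ along characteristics and a ``delicate doubly-exponential Gronwall'' is left entirely unexecuted, so as written the proof of the equivalence is incomplete.

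The paper closes the loop by proving $(3)\Rightarrow(2)$ directly, and the mechanism is different from (and more elementary than) a top-order BKM-type energy argument: the point is to upgrade the logarithmic bound on $\Omega$ to a genuine $L^\infty$ bound by propagating \emph{pointwise decay of $\omega$ near $x=0$}. Concretely, with $M=\int_0^{T^*}\|\omega\|_{L^\infty}\,dt$, the bound $|u(t,x)|\le\|\omega(t)\|_{L^\infty}\,x(-\ln x)$ gives $\phi_t(x)\ge x^{e^M}$, hence $|(\partial_xu)(t,\phi_t(x))|\le\|\omega(t)\|_{L^\infty}\big(1+e^M(-\ln x)\big)$, whose time integral is $\le M\big(1+e^M(-\ln x)\big)$; this yields $|(\partial_x\rho)(t,\phi_t(x))|\le|\rho_0'(x)|\,e^M x^{-Me^M}$. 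Using $\omega_0(0)=\partial_x\rho_0(0)=0$ (so $|\omega_0(x)|,|\rho_0'(x)|\lesssim x$) and arranging $Me^M\le\tfrac12$ --- which costs nothing, since one may restart the estimate at a time $\sigma<T^*$ with $\int_\sigma^{T^*}\|\omega\|_{L^\infty}$ small, the norms on $[0,\sigma]$ being finite by continuity --- one gets $|\omega(t,x)|\lesssim x^{\frac12 e^{-M}}$, hence $\|\Omega(t)\|_{L^\infty}=\big\|\int_x^1\omega/y\,dy\big\|_{L^\infty}<\infty$ with no logarithm, and therefore $\int_0^{T^*}\|\partial_xu\|_{L^\infty}\,dt<\infty$. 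The standard energy estimate $(2)\Rightarrow(1)$ then finishes. This decay-propagation step, which exploits $\phi_t(x)\ge x^{e^M}$ together with the vanishing of the data at the origin, is the idea missing from your argument; without it the $\|\partial_x\rho\|_{L^\infty}$ term cannot be tamed under hypothesis (3) alone.
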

\begin{rem}
  It is well known that for a full 2D inviscid Boussinesq  system, either
$\int_0^{T^*}\|\nabla u(t)\|_{L^\infty}dt<\infty$ or
$\int_0^{T^*}\|\nabla \rho(t)\|_{L^\infty}dt<\infty$
  implies (1) (see e.g. \cite{ChaeNam}, \cite{CaoWu1}).
  Whether (3) implies (1) for 2D inviscid Boussinesq system is an interesting open question.
\end{rem}

\begin{proof}
The implication $(1)\Rightarrow (2),(3)$ and $(4)$ is obvious 
from Sobolev's inequality.

The direction $(2)\Rightarrow (1)$ follows from a standard energy estimate. Indeed,
 if we denote $M:=\int_0^{T^*}\|\partial_xu(t)\|_{L^\infty}dt<\infty$, then
 we get
  for any $t\in[0,T^*]$,
 \begin{equation*}\begin{split}
                  &\|\partial_x\rho(t)\|^2_{L^2}\leq e^{CM} \|\partial_x\rho_0\|^2_{L^2},\\
         &\|\omega(t)\|^2_{L^2}\leq e^{CM}(1+T^*) (\|\omega_0\|^2_{L^2}+\|\partial_x\rho_0\|^2_{L^2}),\\
     &\|\partial_x\rho(t)\|_{L^\infty}\leq e^{M} \|\partial_x\rho_0\|_{L^\infty}, \mbox{ and}\\
          &\|\omega(t)\|_{L^\infty}\leq \|\omega_0\|_{L^\infty} +e^{M}T^* \|\partial_x\rho_0\|_{L^\infty}.\\
     \end{split}\end{equation*}
     Then straightforward estimates lead us to
      \begin{equation*}\begin{split}
               & \|\omega^\prime(t)\|_{L^2}+\|\rho^{\prime\prime}(t)\|_{L^2}\leq
               C_{M,T^*,
\|\omega_0\|_{H^1},  \|\rho_0\|_{H^{2}}
               }\quad\mbox{and}\\
                 & \|\omega^\prime(t)\|_{L^\infty}+\|\rho^{\prime\prime}(t)\|_{L^\infty}
                 \leq
                 C_{M,T^*,
\|\omega_0\|_{W^{1,\infty}}, \|\rho_0\|_{W^{2,\infty}}.
                 }\\
     \end{split}\end{equation*} where $W^{n,p}$ is the usual Sobolev space.
     We repeat this procedure until we get
    \begin{equation*} \|\omega^{(m)}(t)\|_{L^2}+\|\rho^{(m+1)}(t)\|_{L^2}\leq C_{M,T^*,
 \|\omega_0\|_{H^m}, \|\rho_0\|_{H^{m+1}}
    }.\end{equation*}

    For (4)$\Rightarrow$(3), we use the characteristic representation
    for $\omega$: $$\omega(t,\phi_t(x))=
                                         \omega_0(x)+
                                         \int_0^t(\partial_x\rho)
                                         (s,\phi_s(x))ds.$$

    For the direction  $(3)\Rightarrow(2)$, we denote 
 $M:=\int_0^{T^*}\|\omega(t)\|_{L^\infty}dt<\infty$. Then    
    we make an $L^\infty$-estimate for $\partial_x u$ in the
    following way.\\

    1. From $|u(t,x)|\leq \|\omega(t)\|_{L^\infty}\cdot x \cdot (-\ln(x))$, we
    get $\phi_t(x)\geq x^{\exp(\int_0^{t}\|\omega(s)\|_{L^\infty}ds)}\geq x^{\exp{(M)}}$ for
  $t\leq T^*$.
  We also get $\phi_{-t}(x)\leq x^{\exp(-M)}$.

  2. From $\partial_x u= -\Omega +\omega$, we get
  \begin{equation*}\begin{split}
     |(\partial_x u)(t,\phi_t(x))|&\leq 
     |\omega(t,\phi_t(x))|+|\Omega(t,\phi_t(x))|
  \leq \|\omega(t)\|_{L^\infty}(1+(-\ln(\phi_t(x)) )\\
  &\leq  \|\omega(t)\|_{L^\infty}(1+e^M(-\ln(x) )).
  \end{split}\end{equation*}

  3. From $ \partial_t (\partial_x\rho)  + u\partial_x(\partial_x\rho)
  =-(\partial_xu)(\partial_x\rho)$, we obtain
   \begin{equation*}\begin{split}
     |(\partial_x \rho)(t,\phi_t(x))|&\leq     |(\partial_x \rho_0)(x)|
     +\int_0^t|(\partial_x u)(s,\phi_s(x))|\cdot|(\partial_x \rho)(s,\phi_s(x))|ds.
  \end{split}\end{equation*} This implies
    \begin{equation*}\begin{split}
     |(\partial_x \rho)(t,\phi_t(x))|&\leq     |(\partial_x \rho_0)(x)|
      \exp\Big(\int_0^t|(\partial_x u)(s,\phi_s(x))|ds\Big)\\
     &\leq     |(\partial_x \rho_0)(x)|
      \exp\Big(\int_0^t\|\omega(s)\|_{L^\infty}(1+e^M(-\ln(x) )ds\Big)\\
       &\leq     |(\partial_x \rho_0)(x)|
      e^M \left(\frac{1}{x} \right)^{e^M\cdot M}.
  \end{split}\end{equation*}
  4. 
For a moment, assume that $M$ is so small that $M\cdot e^M\leq \frac{1}{2}$. Thanks to $\omega_0(0)=\partial_x\rho_0(0)=0$, we can estimate
  \begin{equation*}\begin{split}
     |\omega(t,\phi_t(x))|&\leq     |\omega_0(x)|
     +\int_0^t|(\partial_x \rho)(s,\phi_s(x))|ds\\
     &\leq  |\omega_0(x)| + |(\partial_x \rho_0)(x)|
     \cdot e^M\cdot \left(\frac{1}{x}\right)^{e^M\cdot M}\cdot T^*\\
      &\leq  \|\omega_0^\prime\|_{L^\infty}\cdot x + \|\rho_0^{\prime\prime}\|_{L^\infty}
      \cdot x
     \cdot e^M\cdot \left(\frac{1}{x}\right)^{1/2}\cdot T^*\\
      &
     \leq C_0 \sqrt{x} e^M (T^*+1)
  \end{split}\end{equation*} where $C_0:=\|\omega_0^\prime\|_{L^\infty}+
  \|\rho_0^{\prime\prime}\|_{L^\infty}$.
 So  we get a decay estimate of $\omega(t,x)$ near $x=0$:  \begin{equation*}\begin{split}
     |\omega(t,x)|&\leq    C_0 \sqrt{\phi_{-t}(x)} e^M (T^*+1)\leq
     C_0 x^{\frac{1}{2}\exp(-M)} e^M (T^*+1).
  \end{split}\end{equation*} This implies $L^\infty$ estimate of $\Omega$:
 \begin{equation*}\begin{split}
|\Omega(t,x)|&\leq \int_0^1\frac{|\omega(t,y)|}{y}dy\leq
C_0  e^M (T^*+1) \int_0^1 y^{\frac{1}{2}\exp(-M)-1}dy\leq
2C_0  e^{2M} (T^*+1).
\end{split}\end{equation*} Then we use $\partial_x u= -\Omega +\omega$
to get $$\|\partial_xu(t)\|_{L^\infty}\leq \|\omega(t)\|_{L^\infty}+2C_0e^{2M}(T^*+1)
 \quad\mbox{ for } t\in[0,T^*].$$

5. For general large $M$, we find $\sigma\in(0,T^*)$ such that
$M_{\sigma}:=\int_{\sigma}^{T^*} |\omega(s)|_{L^\infty}ds$ is so small that
$M_{\sigma}\cdot e^{M_{\sigma}}\leq \frac{1}{2}$. We do the same process not from $t=0$ but
from $t=\sigma$ to get
$$\|\partial_xu(t)\|_{L^\infty}\leq \|\omega(t)\|_{L^\infty}+2C_{\sigma}e^{2M}((T^*-\sigma)+1)
 \quad\mbox{ for } t\in[\sigma,T^*]$$
  where $C_{\sigma}:=
 \sup_{t\in[0,\sigma]}\Big(
  \|\omega(t)\|_{H_0^2}+
  \|\rho(t)\|_{H^3_0}\Big)$. Note that $C_\sigma$ is finite because $(\omega,\rho)$
  lies in $C([0,T);H_0^2\times H^3_0)$ and $\sigma<T^*\leq T$.\\

  Since
  $\|\partial_xu(t)\|_{L^\infty}\leq C\|u(t)\|_{H^2}\leq C\|\omega(t)\|_{H^1}\leq CC_\sigma$ for any $t\in[0,\sigma]$,
  we conclude
  $$\|\partial_xu(t)\|_{L^\infty}\leq \|\omega(t)\|_{L^\infty}+2C_{\sigma}e^{2M}(T^*+1)
  +CC_\sigma
 \quad\mbox{ for } t\in[0,T^*].$$
  \end{proof}

\section{Finite-time blow up examples}
Before we construct a finite-time blow up example, let us first state a lemma concerning the growth of $\Omega$ along the characteristics $\phi_t(x)$.
\begin{lem}
\label{lemma:Omega_t}
Along the characteristic $\phi_t(x)$, we have
  \begin{equation}
  \frac{d}{dt}\Omega(t,\phi_t(x))=\int_{\phi_t(x)}^1\frac{\omega(t,y)^2}{y}dy
  +\int_{\phi_t(x)}^1
 \frac{\partial_x\rho(t,y)}{y}dy.
 \label{eq:dt_Omega}
 \end{equation}
\end{lem}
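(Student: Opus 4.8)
The plan is to compute $\frac{d}{dt}\Omega(t,\phi_t(x))$ directly by the chain rule, writing it as
\[
\frac{d}{dt}\Omega(t,\phi_t(x)) = (\partial_t\Omega)(t,\phi_t(x)) + (\partial_x\Omega)(t,\phi_t(x))\,\dot\phi_t(x),
\]
and then simplifying each of the two pieces using the structure of \eqref{eq:system}. Since $\dot\phi_t(x)=u(t,\phi_t(x))$, $u(t,\xi)=-\xi\,\Omega(t,\xi)$, and differentiating the definition of $\Omega$ in the space variable gives $\partial_x\Omega(t,\xi)=-\omega(t,\xi)/\xi$, the transport term collapses to
\[
(\partial_x\Omega)(t,\phi_t(x))\,u(t,\phi_t(x)) = \omega(t,\phi_t(x))\,\Omega(t,\phi_t(x)).
\]
Note $\phi_t(x)\in(0,1)$ for $x\in(0,1)$ because $0$ and $1$ are stationary under the flow, so all the integrals below are over genuine subintervals of $(0,1)$.

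For the time derivative I would differentiate under the integral sign and substitute the vorticity equation $\partial_t\omega=-u\,\partial_x\omega+\partial_x\rho$, getting
\[
\partial_t\Omega(t,\xi)=\int_\xi^1\frac{-u(t,y)\,\partial_y\omega(t,y)}{y}\,dy+\int_\xi^1\frac{\partial_y\rho(t,y)}{y}\,dy .
\]
The key manipulation is the first integral: using $u(t,y)=-y\,\Omega(t,y)$ its integrand is exactly $\Omega(t,y)\,\partial_y\omega(t,y)$, so one integrates by parts. The boundary contribution at $y=1$ vanishes since $\Omega(t,1)=0$ (and also $\omega(t,1)=0$), while the boundary contribution at $y=\xi$ is $-\Omega(t,\xi)\,\omega(t,\xi)$; the leftover term $-\int_\xi^1 \omega(t,y)\,\partial_y\Omega(t,y)\,dy$ equals $\int_\xi^1 \omega(t,y)^2/y\,dy$ after using $\partial_y\Omega=-\omega/y$ once more. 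Hence
\[
\partial_t\Omega(t,\xi)=-\Omega(t,\xi)\,\omega(t,\xi)+\int_\xi^1\frac{\omega(t,y)^2}{y}\,dy+\int_\xi^1\frac{\partial_y\rho(t,y)}{y}\,dy .
\]
Adding the two contributions at $\xi=\phi_t(x)$, the $\pm\,\Omega(t,\phi_t(x))\,\omega(t,\phi_t(x))$ terms cancel and \eqref{eq:dt_Omega} falls out.

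I do not expect a serious obstacle; this is essentially a bookkeeping computation. The only points deserving a word of care are (i) justifying differentiation under the integral sign and the integration by parts — legitimate because $(\omega,\rho)\in C([0,T);H^m_0\times H^{m+1}_0)$ with $m\ge2$, so by Sobolev embedding and the Hardy-type bounds used in the first Lemma the functions $\omega(t,\cdot)/y$, $\omega(t,\cdot)^2/y$ and $\partial_y\rho(t,\cdot)/y$ are integrable on $(\phi_t(x),1)$ and depend smoothly enough on $t$; and (ii) confirming that the boundary terms at $y=1$ genuinely vanish, which is exactly where the Dirichlet condition $\omega(t,1)=0$ and the normalization $\Omega(t,1)=0$ built into the Biot–Savart law are used.
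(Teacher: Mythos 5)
Your proposal is correct and follows essentially the same route as the paper: chain rule, $\partial_x\Omega=-\omega/x$, substitution of the vorticity equation into $\partial_t\Omega$, integration by parts using $u=-y\,\Omega$ with the boundary term at $y=1$ vanishing, and cancellation of the $\Omega\omega$ terms. The extra remarks on justifying the differentiation under the integral and the vanishing boundary terms are sound and consistent with the regularity provided by the local well-posedness result.
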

\begin{proof}
Note that
\begin{equation}\begin{split}
\frac{d}{dt}\Omega(t,\phi_t(x))&=\partial_t\Omega(t,\phi_t(x))
+u(t,\phi_t(x))~\partial_x\Omega(t,\phi_t(x)).
\label{eq:dt_Omega_1}
  \end{split}\end{equation}
Let us compute $\partial_x \Omega$ and $\partial_t \Omega$ respectively. The definition of $\Omega$ directly gives that
  \begin{equation}
  \partial_x\Omega(t,x)=-\dfrac{\omega(t,x)}{x},
  \label{eq:dx_Omega}
  \end{equation}
  whereas $\partial_t \Omega(t,x)$ can be computed as follows:
\begin{equation}\begin{split}
 \partial_t\Omega(t,x)&=
 \int_x^1\frac{\partial_t \omega(t,y)}{y}dy=
  -\int_x^1\frac{u(t,y)\partial_x \omega(t,y)}{y}dy+
  \int_x^1\frac{\partial_x\rho(t,y)}{y}dy\\
  &=\int_x^1{\Omega(t,y)\partial_x \omega(t,y)}dy+
  \int_x^1\frac{\partial_x\rho(t,y)}{y}dy\\
 &=-\Omega(t,x) \omega(t,x)+\int_x^1\frac{\omega(t,y)^2}{y}dy+\int_x^1
 \frac{\partial_x\rho(t,y)}{y}dy.\end{split}
 \label{eq:dt_Omega_2}\end{equation}
In order to obtain \eqref{eq:dt_Omega}, it suffices to replace $x$ by $\phi_t(x)$ in \eqref{eq:dx_Omega} and \eqref{eq:dt_Omega_2}, and plug them into \eqref{eq:dt_Omega_1}.
\end{proof}

We now prove the following Proposition from which, given Proposition~\ref{bkm}, Theorem~\ref{mainthm} follows.

\begin{prop}
There exist a pair of smooth functions  $\rho_0$ and $\omega_0$  supported in $[\frac{1}{4},\frac{3}{4}]$, such that there is no global classical solution to \eqref{eq:system} with initial data $(\rho_0, \omega_0)$.
\label{prop:monotone_blowup}
\end{prop}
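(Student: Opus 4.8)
The plan is to choose $\rho_0,\omega_0$ so that the flow preserves several sign and monotonicity structures, and then to track $\Omega$ along a nested family of characteristics $\phi_t(x_1)>\phi_t(x_2)>\cdots$, with $x_n\downarrow x_\infty>1/4$, running an inductive ``cascade'': the collapse of $\phi_t(x_{n-1})$ toward the origin forces rapid growth of $\Omega$ along $\phi_t(x_n)$, which in turn collapses $\phi_t(x_n)$, and so on, with the collapse times accumulating to a finite $T$. Since $\phi_t(x_\infty)\le\phi_t(x_n)$ for every $n$, this yields $\liminf_{t\to T^-}\phi_t(x_\infty)=0$. On the other hand, a global classical solution obeys $\int_0^{T}\|\partial_x u(t)\|_{L^\infty}\,dt<\infty$ by Proposition~\ref{bkm}, hence $\phi_t(x)\ge x\exp\!\big(-\int_0^{T}\|\partial_x u\|_{L^\infty}\big)>0$ uniformly for $t\le T$ — a contradiction. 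So no global classical solution can exist.

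Concretely I would take $\omega_0\equiv 0$ and $\rho_0\in C_0^\infty([0,1])$ supported in $[\tfrac14,\tfrac34]$, nonnegative, strictly increasing on $[\tfrac14,a]$ and strictly decreasing on $[a,\tfrac34]$ for a fixed $a\in(\tfrac14,\tfrac34)$. The characteristic representations together with uniqueness of characteristics give: $\omega(t,\cdot),\rho(t,\cdot)$ are supported in $\phi_t([\tfrac14,\tfrac34])$; the right edge is pinned, $\phi_t(\tfrac34)\equiv\tfrac34$, since $\omega$ vanishes on $(\phi_t(\tfrac34),1)$ and so $\Omega(t,\phi_t(\tfrac34))=0$; $\rho(t,\cdot)\ge0$; and $\partial_x\rho(t,\phi_t(x))$ keeps the sign of $\partial_x\rho_0(x)$ (from $\tfrac{d}{dt}\partial_x\rho=-(\partial_x u)\partial_x\rho$ along characteristics), so $\partial_x\rho(t,\cdot)$ and hence $\omega(t,\cdot)=\int_0^t\partial_x\rho$ are $\ge0$ on $\phi_t([\tfrac14,a])$ and $\le0$ on $\phi_t([a,\tfrac34])$; consequently $\Omega(t,\phi_t(a))=\int_{\phi_t(a)}^{3/4}\omega/y\le0$, so $\phi_t(a)$ is nondecreasing and $\phi_t(a)\ge a$.

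Next, the key differential inequality. Fix $x\in(\tfrac14,a)$ and split the $\partial_x\rho$-integral in Lemma~\ref{lemma:Omega_t} at $\phi_t(a)$: on $[\phi_t(a),\tfrac34]$ the integrand is $\le0$ and $1/y\le1/\phi_t(a)$, so that part is $\ge-\rho_0(a)/\phi_t(a)$; integrating the part over $[\phi_t(x),\phi_t(a)]$ by parts gives $\rho_0(a)/\phi_t(a)-\rho_0(x)/\phi_t(x)+\int_{\phi_t(x)}^{\phi_t(a)}\rho(t,y)y^{-2}\,dy$. The $\rho_0(a)/\phi_t(a)$ terms cancel — this cancellation, which is exactly why $\rho_0$ is taken to vanish again at $\tfrac34$ (a ``hump'') rather than be monotone, is the one genuinely delicate algebraic point — and dropping the nonnegative $\omega^2$-term yields
\[
\frac{d}{dt}\,\Omega(t,\phi_t(x))\ \ge\ -\frac{\rho_0(x)}{\phi_t(x)}+\int_{\phi_t(x)}^{\phi_t(a)}\frac{\rho(t,y)}{y^2}\,dy .
\]
Taking $a=:x_0>x_1>x_2>\cdots\downarrow x_\infty>\tfrac14$, writing $X_n(t)=\phi_t(x_n)$, $\Omega_n(t)=\Omega(t,X_n(t))$, $\delta_n=\rho_0(x_{n-1})-\rho_0(x_n)>0$, and using $\rho(t,y)\ge\rho_0(x_n)$ on $[X_n,X_{n-1}]$ and $\rho(t,y)\ge\rho_0(x_{n-1})$ on $[X_{n-1},X_0]$, the last display telescopes (between the two consecutive points $x_n,x_{n-1}$, together with $X_0=\phi_t(a)\ge a$) to
\[
\frac{d}{dt}\,\Omega_n(t)\ \ge\ \frac{\delta_n}{X_{n-1}(t)}-\frac{\rho_0(x_{n-1})}{\phi_t(a)}\ \ge\ \frac{\delta_n}{X_{n-1}(t)}-C_0,\qquad -\frac{d}{dt}\ln X_n(t)=\Omega_n(t),
\]
with $C_0:=(\max\rho_0)/a$. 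This pair of relations is the engine of the cascade.

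Finally, closing the cascade. For the base step one checks that $\Omega_1$ grows enough to push $X_1$ below some threshold in finite time: here $\rho(t,\cdot)$ has order-one mass in the ``bulk'' $\phi_t([\tfrac14,a])$ while $\rho_0(x_1)$, hence the loss $\rho_0(x_1)/\phi_t(a)$, can be made small by placing $x_1$ near $\tfrac14$ (alternatively, keep the nonnegative $\int\omega^2/y$ term and choose $\rho_0$ steep on $[\tfrac14,a]$, so this term is eventually $>C_0$). Then inductively: if $X_{n-1}(t)\le\varepsilon_{n-1}$ for all $t\ge t_{n-1}$, then on $[t_{n-1},t_n]$ one has $\tfrac{d}{dt}\Omega_n\ge\delta_n/\varepsilon_{n-1}-C_0$, which is large once $\varepsilon_{n-1}\ll\delta_n$, so $\Omega_n$ reaches any prescribed level in time $O(\varepsilon_{n-1}/\delta_n)$, after which $X_n$ collapses below $\varepsilon_n$ in a further controlled time; and ``$X_{n-1}$ stays $\le\varepsilon_{n-1}$'' persists because by then $\Omega_{n-1}$ is so large that the bounded loss $-C_0$ cannot return it to $0$ before $t_n$. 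Choosing $\delta_n$ summable (e.g. $\delta_n\sim n^{-2}$, consistent with $\rho_0$ bounded) and $\varepsilon_n\downarrow0$ fast makes $\sum(t_n-t_{n-1})<\infty$, so $t_n\uparrow T<\infty$ and $X_\infty(t_n)\le X_n(t_n)\le\varepsilon_n\to0$, which completes the contradiction. The main obstacle is exactly this last part: choosing the points $x_n$ (equivalently the increments $\delta_n$), the thresholds $\varepsilon_n$, and verifying the persistence step, so that the accelerating growth of $\Omega$ along the family genuinely outruns both the relentless (though per-unit-time bounded) loss $C_0$ and the accumulation of the times $t_n$ — and, for the base step, getting the cascade started before the loss term can stall it.
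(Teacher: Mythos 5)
Your overall architecture is the paper's: a nested family of characteristics $X_n(t)=\phi_t(x_n)$ with $x_n\downarrow x_\infty>0$, the differential inequality from Lemma~\ref{lemma:Omega_t} reduced to ``$\tfrac{d}{dt}\Omega_n\ge \delta_n/X_{n-1}-C$'' (your integration-by-parts derivation, with the cancellation of the $\rho_0(a)/\phi_t(a)$ terms, is a valid and in fact slightly sharper route to the same bound that the paper gets more crudely in \eqref{eq:dt_Omega_crude}--\eqref{eq:dt_Omega_3} using $\phi_t(1/2)\ge 1/2$), and accumulating times $t_n\uparrow T$. The sign/monotonicity observations, the pinning $\phi_t(a)\ge a$, and the final contradiction via Proposition~\ref{bkm} are all correct, and the inductive bookkeeping of $\delta_n$, $\varepsilon_n$, $t_n$ can be made to close.

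The genuine gap is the base case, and it is caused by your choice $\omega_0\equiv 0$. Your own engine gives, for $n=1$, $\tfrac{d}{dt}\Omega_1\ge \delta_1/X_0-\rho_0(a)/X_0=-\rho_0(x_1)/X_0\le 0$, since $\delta_1=\rho_0(a)-\rho_0(x_1)<\rho_0(a)$ and $X_0=\phi_t(a)$ never shrinks below $a$: the guaranteed gain at the outermost level can never exceed the loss. Making $\rho_0(x_1)$ small only makes the lower bound less negative; it never makes it positive, so starting from $\Omega_1(0)=0$ you get no collapse of $X_1$ and the cascade never starts. The non-telescoped form does not help either: the gain $\int_{X_1}^{X_0}(\rho-\rho_0(x_1))y^{-2}\,dy$ is controlled by how small $\phi_t(z)$ is for points $z>x_1$, i.e.\ by exactly the collapse you are trying to produce, pushed one level outward --- and the outermost anchor $\phi_t(a)$ is pinned away from $0$. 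Your fallback of keeping the $\int\omega^2/y$ term requires a quantitative lower bound on $\omega(t,\cdot)$, hence on $\partial_x\rho(t,\phi_t(x))=\partial_x\rho_0(x)\exp(-\int_0^t\partial_xu)$, hence an upper bound on $\partial_x u$ along characteristics; this is a nontrivial bootstrap that you have not carried out and that does not obviously close. The paper sidesteps all of this by taking $\omega_0\ge 0$ large (a plateau of height $M\approx 200$ to the right of $x_1$), which forces $\Omega_n(0)\ge 20$ for every $n$; combined with $\tfrac{d}{dt}\Omega_n\ge -4$ this keeps $\Omega_n\ge 0$ on the whole time interval, makes $\psi_1(1)=-\ln X_1(1)\ge 16$, and thereby ignites the cascade. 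That large initial vorticity is an essential ingredient of the proof, not a convenience, and your proposal is missing a substitute for it.
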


\begin{proof}
\textbf{Step 1.} We construct a pair of initial data $(\rho_0, \omega_0)$ as follows. Let  $\rho_0$ be smooth, nonnegative, supported in $[\frac{1}{4}, \frac{3}{4}]$, with $\max \rho_0 = \rho_0(\frac{1}{2}) = 2$, and $\rho_0(\frac{1}{3})=1$. Moreover, assume $\rho_0$ is increasing in $[\frac{1}{4}, \frac{1}{2}]$, and decreasing in $[\frac{1}{2}, \frac{3}{4}]$. Let $\omega_0$ be smooth, nonnegative, supported in $[\frac{1}{4}, \frac{1}{2}]$, with $\omega_0\equiv M$ in $[0.3, 0.45]$, where $M$ is a large constant to be determined later. Figure \ref{fig:init_2} gives a sketch of the initial data.

\begin{figure}[h!]
\begin{center}
\includegraphics[scale=0.8]{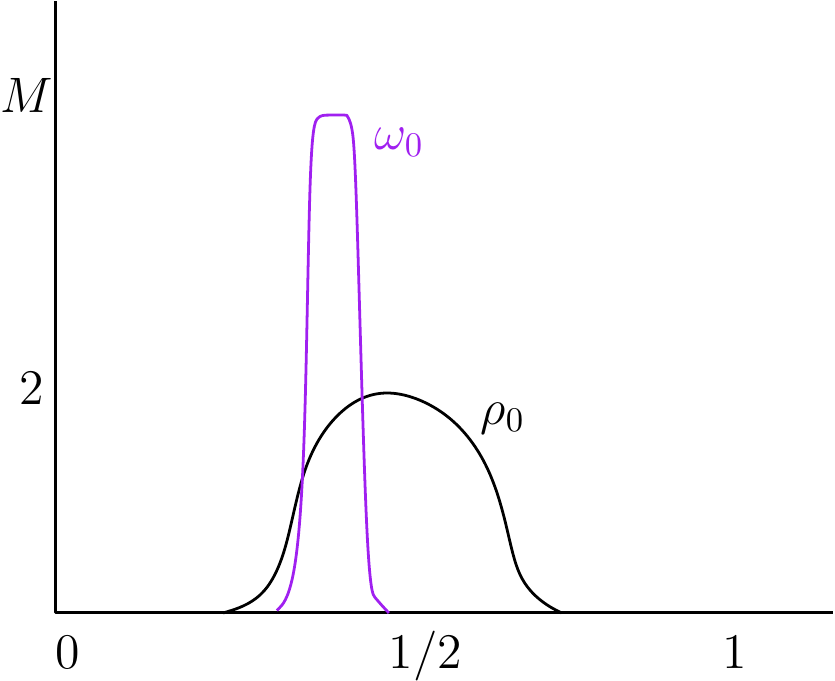}

\caption{\label{fig:init_2} A sketch of the initial data $(\rho_0, \omega_0)$.}
\end{center}
\end{figure}

Towards a contradiction we assume that there is a global classical solution. Let us first make a few observations.  Note that for all $x\in(0,1)$, the characteristic $\phi_t(x)$ must be well-defined for all time, and $\rho$ is conserved along $\phi_t(x)$, i.e. $\rho(t, \phi_t(x)) = \rho_0(x)$. Moreover, for all $t\geq 0$, we have
\begin{equation}\omega(x,t) \leq 0 \text{ for }x\in[\phi_t(1/2),1].
\label{eq:omega_temp}
\end{equation}
To see this, recall that by definition, $\rho_0$ is decreasing in $[\frac{1}{2}, 1]$. If there is a global classical solution, then the characteristics do not cross, hence for all $t\geq 0$, we have $\rho_x(x,t) \leq 0$ in $[\phi_t(1/2),1]$. We then obtain \eqref{eq:omega_temp} as a direct consequence, since the time derivative of $\omega$ along the characteristics $\phi_t(x)$ is equal to $\rho_x$.

Moreover, we have that $\phi_t(1/2)$ is increasing for all $t$. Note that
$$\frac{d}{dt} \phi_t(1/2) = -\phi_t(1/2) \Omega(t, \phi_t(1/2)) = -\phi_t(1/2)\int_{\phi_t(1/2)}^1 \frac{\omega(y,t)}{y}dy,$$
which is always non-negative due to \eqref{eq:omega_temp}.

\vspace{0.1cm}

\noindent\textbf{Step 2.} Our goal is to find a point $x_\infty$ (with $\rho_0(x_\infty)>0$) and a finite time $T$, such that $\phi_T(x_\infty) = 0$. This would imply that the classical solution has to break down at (or before) time $T$.
To show this, the main idea is to consider a family of characteristics originating from a sequence of points $\{x_n\}$. Let $x_1 = 1/3$ (recall that we let $\rho_0(\frac{1}{3}) =1)$. For $n>1$, find $x_n \in [0,\frac{1}{2}]$, such that $\rho_0(x_n) = \frac{1}{2} + 2^{-n}$. Observe that we have $x_1>x_2>x_3>\cdots$ since $\rho_0$ is increasing in $[0,\frac{1}{2}]$. Denote $x_\infty := \lim_{n\to\infty} x_n$, and it follows that $x_\infty>0$ and $\rho(x_\infty)=1/2$. The choice of $\{x_n\}$ is illustrated in Figure \ref{fig:init_1}.

Also, we choose $M$ large enough such that $C_0 := \Omega(0,x_1)=\int_{1/3}^1 \frac{\omega_0(y)}{y} dy>20$ (e.g. $M=200$ should work). Note that at $t=0$, $\Omega(0,x)$ is decreasing in $x$ due to the non-negativity of $\omega_0$. This implies that at $t=0$, we have $\Omega(0, x_n)> 20$ for all $n\geq 1$.

\begin{figure}[h!]
\begin{center}
\includegraphics[scale=1]{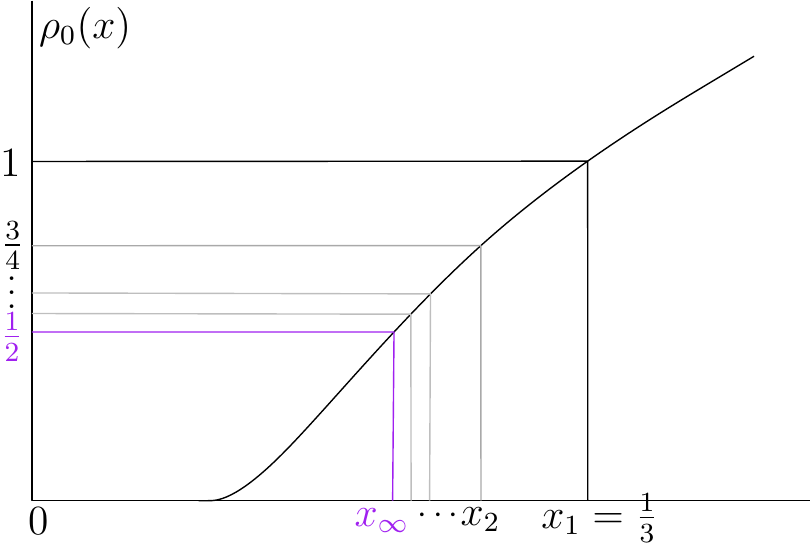}

\caption{\label{fig:init_1} A sketch of the choice of $\{x_n\}$.}
\end{center}
\end{figure}

Let us denote  $\rho_n := \rho_0(x_n)$, $\Phi_n(t):=\phi_t(x_n)$, $\Omega_n(t):=\Omega(t,\Phi_n(t))$. Observe that $\frac{d}{dt}\Phi_n(t)=u(t,\Phi_n(t))=-\Phi_n(t)
\Omega_n(t)$. Denoting $\psi_n(t):=-\ln\Phi_n(t)$ for $n\geq1$, we get
$$\frac{d}{dt}\psi_n(t)=\Omega_n(t).$$

 To see how $\Omega_n(t)$ grows in time, we apply Lemma \ref{lemma:Omega_t} to $x_n$, and use the fact that $\phi_t(1/2) \geq 1/2$ for all $t\geq 0$. This gives
\begin{equation}
\begin{split}
\frac{d}{dt} \Omega_n(t) &\geq \int_{\Phi_n(t)}^{\phi_t(1/2)} \frac{\partial_x \rho(t,y)}{y} dy +   \int_{\phi_t(1/2)}^{1} \frac{\partial_x \rho(t,y)}{y} dy\\
&\geq \int_{\Phi_n(t)}^{\phi_t(1/2)} \underbrace{\frac{\partial_x \rho(t,y)}{y}}_{\geq 0} dy + \underbrace{\frac{1}{\phi_t(1/2)}}_{\leq 2} \underbrace{\big(\rho(t,1)-\rho(t,\phi_t(1/2)\big)}_{=-2}\\
&\geq \int_{\Phi_n(t)}^{\phi_t(1/2)} \frac{\partial_x \rho(t,y)}{y} dy - 4 \quad \text{ for all }n\geq 1.
\end{split}
\label{eq:dt_Omega_crude}
\end{equation}
Recall that $\omega_0$ is chosen such that $\Omega_n(0)\geq 20$, hence \eqref{eq:dt_Omega_crude} immediately implies
$\Omega_n(t) \geq 0 $ for all $n$ and all $t\in[0,5)$. Since $\frac{d}{dt} \psi_n(t) = \Omega_n(t)$, we have that $\psi_n(t)$ is increasing for $t\in[0,5)$.

For $n\geq 2$, using \eqref{eq:dt_Omega_crude}, we have
\begin{equation}
\begin{split}
\frac{d}{dt} \Omega_n(t) &\geq \int_{\Phi_n(t)}^{\Phi_{n-1}(t)} \frac{\partial_x \rho(t,y)}{y} dy - 4\\
&\geq (\rho_{n-1}-\rho_n)\frac{1}{\Phi_{n-1}(t)} - 4\\
& = 2^{-n} e^{\psi_{n-1}(t)} - 4.
\end{split}
\label{eq:dt_Omega_3}
\end{equation}
Collecting everything together, we arrive at the following system of inequalities:
         \begin{equation}
         \begin{cases}& \psi^{\prime\prime}_n(t)\geq
         2^{-n} e^{\psi_{n-1}(t)}-4\\[0.2cm]
         & \psi^{\prime}_{n-1}(t) = \Omega_{n-1}(t) \geq 0\\[0.2cm]
         & \psi_n(t)\geq \psi_{n-1}(t) \geq 0
         \end{cases} \quad\text{ for $n\geq 2$, $0\leq t< 5$.}
         \label{eq:collection_2}
         \end{equation}
       \textbf{Step 3.}  Take $t_1=1$, and let $t_{n+1}=t_{n}+2^{-n}$ and $\tilde{t}_n=t_n+2^{-(n+1)}$ for $n\geq 1$. Let $T:=\lim_{n\rightarrow \infty} t_n=2$ (Note that \eqref{eq:collection_2} holds until $t=5$, hence it holds for all $t\leq T$). We will show that $a_n:= \psi_n(t_n) \to \infty$ as $n\to \infty$.

  Take $n\geq 2$. Since $\psi_{n-1}(t)$ is increasing in $t$ for all $t< 5$, we have
       \begin{equation}
       \psi^{\prime\prime}_n(t)\geq
        2^{-n} e^{\psi_{n-1}(t_{n-1})}-4\quad \text{ for }t_{n-1}\leq t < 5.
        \label{eq:temp_psi}
        \end{equation}
   This implies that for $\tilde{t}_{n-1}\leq t \leq t_n$ (note that all $t_n$'s are less than 5),
        \begin{equation*}
        \begin{split}
        \psi^\prime_n(t)& \geq \psi^\prime_n(\tilde{t}_{n-1})-4(t_n-\tilde t_{n-1})\quad \text{(since $\psi_n''(t)\geq -4$)}\\
        &\geq (2^{-n} e^{\psi_{n-1}(t_{n-1})}-4)
        (\tilde{t}_{n-1}-t_{n-1})+\psi^\prime_n(t_{n-1})-4(t_n-\tilde t_{n-1}) \quad\text{(using \eqref{eq:temp_psi})}\\
        &\geq ( 2^{-n} e^{\psi_{n-1}(t_{n-1})}-4) 2^{-n} - 4\cdot 2^{-n}\\
        &= ( 2^{-n} e^{\psi_{n-1}(t_{n-1})}-8) 2^{-n}.
        \end{split}
        \end{equation*}
Once we have the lower bound for $\psi_n'(t)$ for $\tilde t_{n-1} \leq t \leq t_n$, we can use it get a lower bound for $\psi_n(t_n)$ as follows:
\begin{equation}
\begin{split}
\psi_n(t_n)&\geq  ( 2^{-n} e^{\psi_{n-1}(t_{n-1})}-8) 2^{-n}\cdot(t_n-\tilde{t}_{n-1})+\psi_n(\tilde{t}_{n-1})\\
&\geq ( 2^{-n} e^{\psi_{n-1}(t_{n-1})}-8) 2^{-2n} +\psi_{n-1}(\tilde t_{n-1})\\
&\geq ( 2^{-n} e^{\psi_{n-1}(t_{n-1})}-8) 2^{-2n}
        +\psi_{n-1}(t_{n-1}),
\end{split}
\label{eq:psi_temp}
\end{equation}
where in the second inequality we used the fact that $\psi_n \geq \psi_{n-1}$, and in the last inequality we used that $\phi_{n-1}$ is increasing for $t\leq 5$, hence $\phi_{n-1}(\tilde t_{n-1}) \geq \phi_{n-1}( t_{n-1})$.

\noindent \textbf{Step 4}. Denoting $a_n:= \psi_n(t_n)$, we obtain the following recursive relation from \eqref{eq:psi_temp}:
\begin{equation*}
\begin{split}
a_n &\geq 2^{-2n}(2^{-n}e^{a_{n-1}}-8)+a_{n-1}\\
&\geq e^{a_{n-1}-3n}-1+a_{n-1}\quad \text{ for $n\geq 2$}.
\end{split}
\end{equation*}
One can then use induction to show that if $a_1 \geq 9$, then $a_n\geq 3n+6$ for all $n\geq 1$, hence $a_n\to\infty$ as $n\to\infty$.

Finally it remains to check that whether $a_1\geq 9$ is satisfied, i.e. whether $\psi_1(1) \geq 9$. Recall that $\psi_1(0)\geq 0$, and $\psi_1'(t) = \Omega_1(t)$, with $\Omega_1(t)\geq 20$ and $\Omega_1'(t)\geq -4$. Hence we have $\Omega_1(t)\geq 16$ for $0\leq t\leq 1$, which gives $\psi_1(1)\geq 16$, and this concludes the proof.
\end{proof}

{\bf Acknowledgement.} KC has been partially supported by the National Science Foundation (NSF) grant DMS-1159133. AK and YY have been partially supported by the NSF-DMS grants 1104415 and 1159133.
AK acknowledges support of the Guggenheim Fellowship and thanks Guo Luo and Vladimir Sverak for useful discussions.


\begin{thebibliography}{99}

\bibitem{BKM}
J. T. Beale, T. Kato, and A. Majda,  \it Remarks on the breakdown of smooth solutions for the 3-D Euler equations, \rm
Commun. Math. Phys., {\bf 94}, pp. 61--66, 1984

\bibitem{CaoWu1} C.~Cao and J.~Wu, \it Global regularity for the 2D anisotropic Boussinesq equations with vertical dissipation,
\rm Archive for Rational Mechanics and Analysis, {\bf 208} (2013), 985--1004

\bibitem{Chae} D.~Chae, \it Global regularity for the 2D Boussinesq equations with partial viscosity
terms, \rm Adv. Math. {\bf 203} (2006), 497--513

\bibitem{ChaeNam} D.~Chae and H. Nam, \it Local existence and blow-up criterion for the {B}oussinesq
              equations, \rm Proc. Roy. Soc. Edinburgh Sect. A.
              {\bf 127} (1997), 935--946
    \bibitem{CF1} P.~Constantin and C.~Fefferman, \it Direction of vorticity and the problem of
global regularity for the Navier-Stokes equations, \rm Indiana Univ. Math. J. {\bf 42} (1993), 775--789

\bibitem{CFM}
P.~Constantin, C.~Fefferman and A.~Majda, \it
Geometric constraints on potentially singular solutions for the
$3$-D Euler equation, \rm
Communications in PDE, {\bf 21} (1996), pp. 559--571

\bibitem{EShu} W. E and C. Shu, \it Samll-scale structures in Boussinesq convection, \rm Phys. Fluids {\bf 6}
(1994), 49--58

\bibitem{Gibbon} J.D.~Gibbon, \it The three-dimensional Euler equations: Where do we stand? \rm
Physica D {\bf 237} (2008), 1894--1904

\bibitem{HouLi} T.~Hou and C.~Li, \it Global well-posedness of the viscous Boussinesq equations, \rm  Discrete
Contin. Dyn. Syst. {\bf 12} (2005), 1--12

\bibitem{HL1}
T. ~Y. Hou and R. Li,
\it Dynamic depletion of vortex stretching and non-blowup of the
3-D incompressible Euler equations,
\rm {J. Nonlinear Science}, \textbf{16} (2006), 639--664

\bibitem{HL2}
T. Y. Hou and R. Li, \it Blowup or no blowup? The interplay between theory and numerics,
\rm{Physica D}, \textbf{237}, pp. 1937--1944, 2008

\bibitem{HouLuo1} T.~Hou and G.~Luo, \it Potentially Singular Solutions of the 3D Incompressible Euler Equations,  \rm preprint arXiv:1310.0497


\bibitem{HouLuo2} T.~Hou and G.~Luo, \it On the finite-time blow up of a 1D model for the 3D incompressible Euler equations,  \rm preprint arXiv:1311.2613

\bibitem{KS} A.~Kiselev and V.~Sverak, \it Small scale creation for solutions of the incompressible two dimensional Euler equation, \rm preprint arXiv:1310.4799

\bibitem{MB} A.~Majda and A.~Bertozzi, \it Vorticity and
Incompressible Flow, \normalfont Cambridge University Press, 2002

\bibitem{PumSig} A.~Pumir and E.D. Siggia, \it Development of singular solutions to the axisymmetric
Euler equations, \rm Phys. Fluids A {\bf 4} (1992), 1472--1491

\bibitem{Yud2000} V.I.~Yudovich, \it Eleven great problems of mathematical hydrodynamics, \rm Moscow Mathematical Journal,
{\bf 3}(2003), 711--737

\end{thebibliography}
\end{document}